\newtheorem{theorem}{Theorem}
\newtheorem{lemma}{Lemma}
\newtheorem{corollary}{Corollary}
\newtheorem{proposition}{Proposition}
\newtheorem*{remark}{Remark}
\begin{document}
	
	\title{\vspace{-2.6cm}Smoothing finite-order bilipschitz homeomorphisms of 3-manifolds}
	\author{Lucien Grillet}
	\date{}
	\maketitle
	
	\begin{abstract}
		We show that, for $\varepsilon=\dfrac{1}{4000}$, any action of a finite cyclic group by $(1+\varepsilon)$-bilipschitz homeomorphisms on a closed 3-manifold is conjugated to a smooth action.
	\end{abstract}
	
	\section{Introduction}
	
	In 1939, P. A. Smith proved \cite[7.3 Theorem 4]{Sm1939} that the fixed set $M^\sigma$ of a finite-order and orientation-preserving homeomorphism $\sigma$ of the 3-sphere $S^3$ was empty or a circle $S^1$. He then asked \cite[Problem 36]{Ei1949} if this circle could be knotted. This question is known as the Smith conjecture. More generally, the question is whether such a map is conjugated to an orthogonal map. Since the work on geometrization by Thurston and Perelman, we know that this is the case for smooth maps \cite{BLP2005}. On the other hand, Bing \cite{Bi1952} gave an example of a continuous orientation-reversing involution with a wildly embedded 2-sphere as fixed set. Therefore, this involution could not be conjugated to an orthogonal map. Montgomery and Zippin \cite{MZ1954} also modified Bing's example to obtain an orientation-preserving involution with a wild circle as a fixed set. Jani Onninen and Pekka Pankka showed in 2019 \cite{OnPa2019} that there also exists wild involutions in the Sobolev class $W^{1,p}$.

	One can then wonder what happens for maps with more regularity but which are not differentiable. In \cite{Ha2008}, D. H. Hamilton announced that quasi-conformal reflections are tame, but the proof seems to remain unpublished. In fact, even the Lipschitz case seems to be  considered open. For example, as recently as 2013, Michael Freedman asked in \cite[Conjecture 3.21]{Fr2013} if the Bing involution could be conjugated to a Lipschitz homeomorphism. Jani Onninen and Pekka Pankka reiterate this question in 2019 \cite{OnPa2019}. In this paper, we give a partial answer to Freedman's question, proving that for $\varepsilon>0$ small enough, such wild finite-order maps can not be $(1+\varepsilon)$-bilipschitz. More precisely, we will show the following theorem.
	
	\begin{restatable}{theorem}{finaltheorem}\label{finaltheorem}
		For $\varepsilon=\dfrac{1}{4000}$, any action of a finite cyclic group by $(1+\varepsilon)$-bilipschitz homeomorphisms on a closed 3-manifold is conjugated to a smooth action.
	\end{restatable}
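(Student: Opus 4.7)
The plan is to prove tameness of the fixed sets of each power of $\sigma$ using the smallness of $\varepsilon$, and then to invoke an equivariant smoothing theorem for tame cyclic actions on $3$-manifolds.

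First I would replace the background smooth metric $d$ on $M$ by the $G$-averaged metric $d_G(x,y) := \max_{0 \le k < n} d(\sigma^k x, \sigma^k y)$, which is $(1+\varepsilon)$-bilipschitz to $d$ and for which $\sigma$ acts as an exact isometry. This reduces the problem to an isometric finite cyclic action on a metric space which is $(1+\varepsilon)$-bilipschitz equivalent to a smooth closed $3$-manifold. Smith theory already tells us the fixed set of each non-trivial $\sigma^k$ is a cohomology submanifold of low dimension; what remains is to promote it to a \emph{tamely} embedded submanifold.

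Next, near a fixed point $x_0$ of $\sigma^k$, I would blow up the balls $B(x_0, r)$ by factor $1/r$ and pass to Gromov--Hausdorff limits as $r \to 0$. Because $(M,d_G)$ is bilipschitz to a smooth Riemannian $3$-manifold, each tangent cone is $(1+\varepsilon)$-bilipschitz to Euclidean $\mathbb{R}^3$. The isotropy group $\langle \sigma^k \rangle$ acts isometrically and with finite order on this limit, and the $\varepsilon = 1/4000$ hypothesis should force this limiting action to be bilipschitz-conjugate to a standard orthogonal action on $\mathbb{R}^3$, by a metric-rigidity argument that quantifies how far a $(1+\varepsilon)$-bilipschitz finite-order self-map of $\mathbb{R}^3$ can be from a rotation. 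Pulling back, one obtains a bilipschitz chart near $x_0$ in which $\sigma^k$ is a linear orthogonal map and $\mathrm{Fix}(\sigma^k)$ is a linear subspace, giving local tameness. I would then patch these local charts over the orbit of $x_0$ using the fact that $\sigma$ is an isometry of $d_G$, to obtain tame global embeddings of each $\mathrm{Fix}(\sigma^k)$. Once tameness is in hand, an equivariant Bing--Moise triangulation yields an invariant PL structure on $M$, and equivariant smoothing for cyclic actions on $3$-manifolds (classical once tameness holds, in the spirit of the work surrounding the Smith conjecture and equivariant geometrization) conjugates the action to a smooth one.

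The main obstacle is the quantitative local linearization in the blow-up step: bilipschitz maps are differentiable only almost everywhere, so no derivative is available at the fixed point itself, and the linearization must be obtained purely by metric means via Gromov--Hausdorff compactness together with the rigidity of nearly-isometric finite-order maps of $\mathbb{R}^3$. Tracking the constants through the averaging, the blow-up rigidity, and the patching into a single bilipschitz chart, so that they compose to yield a clean threshold like $\varepsilon = 1/4000$, is where the real technical work lies.
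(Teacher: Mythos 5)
Your overall skeleton (prove tameness of the fixed sets of all powers, then invoke a smoothing theorem for tame finite cyclic actions on closed 3-manifolds) matches the paper's final step, which uses the Kwasik--Lee result that such an action is smoothable if and only if every $M^H$ is tame. But the heart of your argument has a genuine gap: the blow-up step. The claim that a finite-order $(1+\varepsilon)$-bilipschitz self-map of (a space $(1+\varepsilon)$-bilipschitz to) $\mathbb{R}^3$ fixing a point must be bilipschitz-conjugate to an orthogonal map is essentially a local restatement of the theorem you are trying to prove, and you offer no mechanism for it; recall that by Freedman's question it is open whether even the Bing involution is Lipschitz-conjugate to something nice, so ``metric rigidity of nearly-isometric finite-order maps of $\mathbb{R}^3$'' cannot be cited as known. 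Worse, even if every Gromov--Hausdorff tangent cone carried a standard orthogonal action, that would not produce the chart you need: GH convergence of rescaled balls gives no homeomorphism from a neighborhood of the fixed point to the limit, and wildness is exactly the kind of phenomenon that can persist self-similarly at all scales (Bing's construction is self-similar), so the rescaled limits can look standard while the action remains wild. Passing from ``the tangent cone action is linear'' to ``there is an actual topological chart near $x_0$ in which $\sigma^k$ is linear and $\mathrm{Fix}(\sigma^k)$ is a linear subspace'' is precisely where tameness would be proved, and no argument is given. You acknowledge this obstacle yourself, but acknowledging it does not close it.

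For comparison, the paper takes a different and more hands-on route that never attempts a local linearization. It constructs a Lipschitz vector field $\pmb{v}(x)=\exp_x^{-1}\big(B(x)\big)$, where $B(x)$ is the Riemannian center of mass of the orbit of $x$; the bound $\varepsilon=\frac{1}{4000}$ enters only to show a definite contraction $\|\pmb{v}(\varphi_\tau(x))\|\leq k\|\pmb{v}(x)\|$ along the flow, whence the flow lines converge uniformly to the fixed set. This exhibits the complement $M\setminus M^G$ as the interior of a compact manifold with boundary whose inclusion extends continuously (not injectively) over the boundary. A separate, purely topological tameness criterion then upgrades this to tameness of $\Sigma=M^G$: the induced map $\partial X\to\Sigma$ is shown (after passing to a double cover in the surface case) to be cell-like, hence approximable by homeomorphisms by Moore's theorem, which yields a tubular neighborhood and tameness; only then is Kwasik--Lee invoked. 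So the quantitative hypothesis is consumed by a dynamical convergence estimate for an explicit flow, not by a blow-up rigidity statement, and the wildness is excluded by the cell-like approximation machinery rather than by any scale-invariant metric argument. To salvage your approach you would need to supply exactly the missing local rigidity-plus-chart construction, which is not easier than the paper's argument and may well be false as stated without it.
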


	This theorem also implies that every $C^1$ action of a finite cyclic group on a closed 3-manifold $M$ is conjugated to a smooth action, without any condition on the norm of the derivatives of the elements of the group. Indeed, we can define a $C^0$ metric on the manifold by averaging the pullbacks of the starting metric by every element of the group. This $C^0$ metric can then be approximated as closed as desired by a smooth metric. The action will then be $(1+\varepsilon)$-bilipschitz for this last metric, which is conjugated to a smooth action by Theorem \ref{finaltheorem}. In particular this implies the following corollary.

	\begin{corollary}
		The quotient of a compact 3-manifold by a finite cyclic $C^1$ action is an orbifold.
	\end{corollary}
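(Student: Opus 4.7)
The argument is essentially the one sketched by the author in the paragraph preceding the corollary; my plan is just to fill in the details and isolate where the real work happens.

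First, I would fix any smooth Riemannian metric $g_0$ on the compact 3-manifold $M$ and form the averaged metric
\[ g = \frac{1}{|G|}\sum_{\gamma\in G}\gamma^{*}g_0. \]
Because $G$ acts by $C^1$-diffeomorphisms, each pullback $\gamma^{*}g_0$ is a continuous symmetric positive-definite 2-tensor, hence so is $g$. By construction $g$ is $G$-invariant, so every $\gamma\in G$ is an isometry of the (merely $C^0$) Riemannian structure $g$.

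Next I would approximate $g$ by a smooth Riemannian metric $g'$ with $(1-\delta)g\le g'\le (1+\delta)g$ for any prescribed $\delta>0$. This is standard: using a finite smooth atlas and a smooth partition of unity, convolve the components of $g$ with a mollifier in each chart; the resulting smooth symmetric 2-tensor can be made as $C^0$-close to $g$ as one wishes, and small enough $C^0$-perturbations of a continuous positive-definite tensor remain positive-definite. The two-sided comparison with $g$ forces every $\gamma\in G$ to be $\sqrt{(1+\delta)/(1-\delta)}$-bilipschitz for the Riemannian distance $d_{g'}$. Choosing $\delta$ small enough that this constant is at most $1+\tfrac{1}{4000}$, Theorem~\ref{finaltheorem} provides a homeomorphism $h\colon M\to M$ conjugating the $G$-action to a smooth $G$-action on $M$.

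To conclude, I would invoke the classical fact that a smooth action of a finite group on a manifold canonically endows the quotient with an orbifold structure: around each orbit, Bochner's linearization theorem (applied to the isotropy group) gives a smooth slice on which the isotropy acts by an orthogonal representation, and these slice charts assemble into an orbifold atlas. Pulling this atlas back through $h$ gives an orbifold structure on $M/G$ for the original action.

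The only genuinely hard step is the application of Theorem~\ref{finaltheorem}; the rest---invariant $C^0$ metric via averaging, smooth approximation with controlled bilipschitz distortion, and Bochner linearization---is classical. The only mildly delicate point is keeping the multiplicative error in the bilipschitz constant below $1+\tfrac{1}{4000}$, which is immediate once $\delta$ is chosen small enough.
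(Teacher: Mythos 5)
Your argument is correct and is essentially the paper's own: the paragraph preceding the corollary sketches exactly this route (average the pullbacks to get an invariant $C^0$ metric, smooth it with small bilipschitz distortion, apply Theorem \ref{finaltheorem}, and use that quotients of smooth finite actions are orbifolds). You have simply filled in the standard details (mollification, the $\sqrt{(1+\delta)/(1-\delta)}$ estimate, Bochner linearization), which is fine.
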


	This partially answers a question asked by Juan Souto in \cite{So2010}. The quotient of a compact 3-manifold by a finite and smooth action is an orbifold, but it is not easy to determine if a $C^1$ action suffices. He explains that he had to write his Theorem 0.1 for smooth actions instead of $C^1$ actions because of this problem.\\

	Theorem \ref{finaltheorem} is proved by studying the tameness of the fixed set of the action. We say that such a set is tamely embedded if there is an ambient homeomorphism sending it to a polyhedron. If the fixed sets of the elements of such an action are tamely embedded, it is known that this action is smoothable (see Theorem \ref{Kwasik}).
	
	Wes say that a set which is not tamely embedded is wildly embedded. A well known example of such a wild embedding is the Alexander horned sphere, which is a 2-sphere in $S^3$ with a complement which is not simply connected (called a Alexander horned ball). This wild sphere is depicted in Figure \ref{alexandersphere}, on which the exterior is an Alexander horned ball. A sphere with such a complement cannot be tamely embedded. The Bing involution \cite{Bi1952} is constructed using the Alexander horned ball. Bing proved namely that gluing two Alexander horned balls along their boundaries yields a 3-sphere. Exchanging these two copies produces an involution of the 3-sphere with a wildly embedded 2-sphere as a fixed set. The fact that the fixed set of the Bing involution is wildly embedded shows that it is not conjugate to a smooth action.
	
	\begin{figure}[h]
		\caption{An Alexander horned sphere}
		\centering
		\includegraphics[height=3cm]{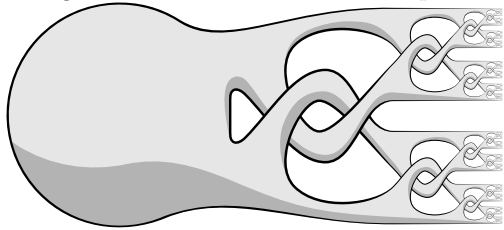}
		\label{alexandersphere}
	\end{figure}

	More precisely, the wildness of the Alexander horned ball appears in the fact that its interior is not homeomorphic to the interior of a compact manifold with boundary.

	In unpublished work, Pekka Pankka and Juan Souto already showed that the Bing involution was not conjugated to a $(1+\varepsilon)$-bilipschitz involution for every $\varepsilon>0$. They showed that if it was the case, the fixed set of this involution should have a complement homeomorphic to the interior of a compact manifold with boundary. This is however a weaker result than Theorem \ref{finaltheorem}. Indeed, there exist wild embeddings with complements homeomorphic to the interior of a compact manifold with boundary. For example, the Fox-Artin sphere constructed in \cite[Example 3.2]{FoAr1948} and depicted in Figure \ref{foxartinsphere} is a wildly embedded sphere whose complement is the disjoint union of two balls.
	
	\begin{figure}[h]
		\caption{A Fox-Artin sphere}
		\centering
		\includegraphics[height=3cm]{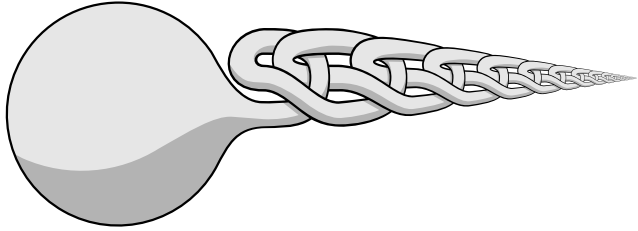}
		\label{foxartinsphere}
	\end{figure}

	We do not know if there exists an involution of $S^3$ whose fixed set is wild and has a complement homeomorphic to a disjoint union of balls, but such examples exist in dimension 4. Indeed Bing constructed \cite{Bi1957} a non-manifold $D$ whose product with $\mathbb{R}$ is homeomorphic to $\mathbb{R}^4$. The involution $x\mapsto -x$ along the $\mathbb{R}$ factor yields an involution of $\mathbb{R}^4$ whose fixed set is a non-manifold and with two balls as its complement.\\

	We will now briefly present the structure of this paper. In the first part, we recall standard results of Smith theory and we present some definitions and tools about topological tameness. We also explain how Theorem \ref{finaltheorem} can be reduced to two propositions. 
	
	The first proposition, proved in Section \ref{parttame}, is a tameness criterion allowing us to obtain, in some specific cases, the tameness of an embedding from the topology of its complement.
	
	\begin{restatable}{proposition}{tamenesscriterion}\label{tamenesscriterion}
		Let $\Sigma$ be a closed topological submanifold of a closed 3-manifold $M$. Suppose that its complement $M\setminus\Sigma$ is homeomorphic to the interior of a compact manifold $X$ with boundary. If the inclusion $i:M\setminus\Sigma\rightarrow M$ extends to a continuous map from $X$ to $M$
		
		\centerline{
			\xymatrix{
				X \ar@{.>}[rd]\\
				M\setminus\Sigma \ar@{>}[u] \ar@{->}[r]^i & M
			}
		}
		then $\Sigma$ is tamely embedded in $M$.
	\end{restatable}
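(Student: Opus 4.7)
The plan is to use the extension $\pi\colon X\to M$ to transfer the collar of $\partial X$ in $X$ into a bicollar or tubular neighborhood of $\Sigma$ in $M$, and then invoke standard tameness criteria in dimension $3$: Brown's bicollar theorem for codimension-$1$ submanifolds, and the direct tubular-neighborhood characterization for $1$-submanifolds. After reducing to a single connected component of $\Sigma$, the $0$-dimensional case is trivial; the two substantive cases are $\dim\Sigma=2$ and $\dim\Sigma=1$.

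The heart of the argument is a local analysis near a point $p\in\Sigma$. Using the topological-submanifold hypothesis, pick a chart $V\cong\mathbb{R}^3$ of $M$ around $p$ with $V\cap\Sigma\cong\mathbb{R}^k$ as a standard linear subspace. The complement $V\setminus\Sigma$ has simple topology --- two open half-spaces in codimension $1$, or $\mathbb{R}\times(\mathbb{R}^2\setminus\{0\})$ in codimension $2$. Because $\pi|_{\operatorname{int}(X)}$ is a homeomorphism, every component $W$ of $V\setminus\Sigma$ pulls back to an open set $\pi^{-1}(W)\subseteq\operatorname{int}(X)$ homeomorphic to $W$. The key task is to identify the closure $\overline{\pi^{-1}(W)}$ in $X$: using a collar of $\partial X$ (Brown) together with the properness of $\pi$, one shows that this closure is a standard closed half-ball (codimension $1$) or closed solid cylinder (codimension $2$), meeting $\partial X$ in a disk or annulus that $\pi$ maps onto $V\cap\Sigma$ either by a homeomorphism or by the standard circle-bundle projection.

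Having established this local product structure, a gluing argument using the collar of $\partial X$ produces a global bicollar $\Sigma\times(-1,1)\hookrightarrow M$ in the codimension-$1$ case, or a global tubular neighborhood $\Sigma\times D^2\hookrightarrow M$ in the codimension-$2$ case. Tameness then follows from Brown's bicollar theorem in the first case, and directly in the second.

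The main obstacle is the local analysis --- specifically, ruling out that $\pi$ folds or crumples $\partial X$ over $V\cap\Sigma$ in a pathological way, so that $\overline{\pi^{-1}(W)}$ really does have the expected half-ball or solid-cylinder model. This step uses crucially the interplay of three hypotheses: the topological-submanifold structure of $\Sigma$, the fact that $\pi|_{\operatorname{int}(X)}$ is a homeomorphism (so components of preimages are determined by components of images), and the collar structure of $\partial X$ in $X$. Once the local models are in hand, the global gluing and the application of Brown's theorem are routine.
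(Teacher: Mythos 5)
There is a genuine gap, and it appears at the very first step of your local analysis. You choose a chart $V\cong\mathbb{R}^3$ of $M$ around $p\in\Sigma$ in which $V\cap\Sigma$ is a \emph{standard linear subspace}. That is precisely the assertion that $\Sigma$ is locally flat at $p$, which in dimension $3$ is essentially equivalent to the tameness you are trying to prove; the hypothesis only says that $\Sigma$ is a topological submanifold, i.e.\ a manifold in the subspace topology. The Alexander horned sphere is a topological submanifold of $\mathbb{R}^3$, yet at a wild point no such chart exists, and the local complementary pieces are not half-spaces (on the bad side they are not even simply connected). So the ``simple topology'' of $V\setminus\Sigma$ on which your whole local model rests is unavailable. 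All that can be extracted from the hypotheses is a local Jordan--Brouwer separation statement (small neighborhoods of a point of $\Sigma$ meet $M\setminus\Sigma$ in exactly two components in codimension $1$, and in a connected set in codimension $2$), and this is exactly what the paper uses, to build a two-sided unfolding $\tilde\Sigma\to\Sigma$ rather than a local product chart.

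Second, even granting your setup, the step you yourself flag as the main obstacle --- showing that $\overline{\pi^{-1}(W)}$ is a standard closed half-ball or solid cylinder, i.e.\ ruling out folding or crushing of $\partial X$ over $\Sigma$ --- is where the entire content of the proposition lies, and you offer no mechanism for it. The boundary map $f=\pi|_{\partial X}$ can a priori collapse disks of $\partial X$ to points and have nondegenerate continua as fibers; properness of $\pi$, the collar of $\partial X$, and the fact that $\pi$ is a homeomorphism on the interior do not exclude this. The paper's proof supplies the missing mechanism from decomposition-space theory: it proves that the fibers of the (unfolded) boundary map are connected and that preimages of disks are disks (via an intersection-number argument), so the map is cell-like, and then invokes a Moore-type approximation theorem (cell-like maps between surfaces are approximable by homeomorphisms), with a parallel circle-bundle argument when $\Sigma$ is a curve; only then does one get a genuine bicollar/tubular neighborhood and conclude tameness. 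Some approximation theorem of this kind, or an equivalent decomposition-theoretic input, is indispensable: ``one shows that this closure is a standard half-ball'' cannot be completed from the three ingredients you list, and as stated your outline would equally ``prove'' local flatness of the Alexander sphere at the point where you posit the linear chart.
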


	We recall that a topological submanifold is a subset which is a topological manifold for the induced topology. For example, the Alexander horned sphere is a topological submanifold of $\mathbb{R}^3$. Note that the extension off the map $i$ just needs to be continuous, without any injectivity hypothesis. This proposition will be proved by showing that the resulting map from $\partial X$ to $\Sigma$ is approximable by coverings.
	
	In the last part, we show how this criterion can be applied to finite order $(1+\varepsilon)$-bilipschitz homeomorphisms by proving Proposition \ref{lipschitzproposition}.

	\begin{restatable}{proposition}{lipschitzproposition}\label{lipschitzproposition}
		For $\varepsilon=\dfrac{1}{4000}$ and for every $(1+\varepsilon)$-bilipschitz action of a finite group $G$ on a compact Riemannian manifold $M$, the fixed set $M^G$ satisfies the conditions of Proposition \ref{tamenesscriterion}.
	\end{restatable}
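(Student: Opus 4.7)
My goal is to construct a $G$-invariant closed tubular neighborhood $T$ of $M^G$ together with a continuous retraction $\pi : T \to M^G$ and a ``radius'' function $\rho:T\to [0,1]$ with $\rho^{-1}(0)=M^G$, such that the pair $(T\setminus M^G,\rho)$ is homeomorphic to $\partial T \times (0,1]$ projecting to $(0,1]$. Granting this, the three conditions of Proposition~\ref{tamenesscriterion} are automatic: $M^G$ is a closed topological submanifold (being locally the base of the product), the compact manifold is $X := \overline{M \setminus T}$ with boundary $\partial T$ (its interior is homeomorphic to $M \setminus M^G$ after stretching the open collar $\partial T\times (0,1]$ onto $T\setminus M^G$), and the extension of the inclusion to $X\to M$ is defined on $\partial X = \partial T$ by $\pi$, which is continuous. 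It is harmless to first replace the Riemannian metric by its $G$-average, so that $G$ acts isometrically in a $C^0$-metric $(1+\varepsilon')$-equivalent to the original.

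\textbf{Local model at fixed points.} The substantive step is to build $T$ locally in a $G$-invariant ball $B(p,r_0)$ at each $p\in M^G$ and then check compatibility. The idea is to compare the action near $p$ with the induced orthogonal action on $T_pM\cong \mathbb{R}^3$. Rescaling the metric balls $B(p,r)$ by $1/r$ and sending $r\to 0$, the pointed Gromov--Hausdorff limit is $\mathbb{R}^3$ equipped with a finite-order orthogonal $G$-action; indeed the limit of $(1+\varepsilon)$-bilipschitz maps of finite order fixing the origin, whose iterates stay $(1+\varepsilon)$-bilipschitz, is forced to be a bilipschitz orthogonal transformation on the tangent cone. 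For $\varepsilon = 1/4000$, I expect the bilipschitz bound to be tight enough to promote this asymptotic statement into an honest homeomorphism from some small $B(p,r_0)$ onto an open subset of the linear model, carrying $M^G\cap B(p,r_0)$ to the linear fixed subspace. From this homeomorphism the desired disk-bundle structure on a small $G$-invariant tube around $p$ is read off, and one obtains $\rho$ from the ambient linear radius, $\pi$ from the orthogonal projection to the fixed subspace.

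\textbf{Main obstacle.} The hard part is the quantitative linearization: transporting the limiting orthogonal model back to a genuine homeomorphism at a fixed (albeit small) scale, using only the bilipschitz bound $\varepsilon=1/4000$. Since bilipschitz maps have only measurable derivatives, the classical smooth linearizations of Bochner or Cartan are unavailable, so the argument must stay in the purely topological/metric category. The very specific constant suggests the proof tracks, explicitly, how $(1+\varepsilon)$-bilipschitz maps distort small metric spheres around $p$ and the vertices of the $G$-orbits that sit on them, combined with a rigidity lemma stating that finite-group actions on $\mathbb{R}^3$ that are close enough to orthogonal admit a $G$-equivariant straightening. A secondary obstacle is gluing the local product structures and radius functions into a single $G$-equivariant global tube $T$ with continuous $\pi$, which should reduce to a partition-of-unity argument once $M^G$ is known to be a topological submanifold and the local models are canonically identified along their overlaps.
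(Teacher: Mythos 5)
There is a genuine gap, and it sits exactly where you flag it yourself: the ``quantitative linearization.'' First, the blow-up step does not give you a linear model. Rescaled limits of a finite-order $(1+\varepsilon)$-bilipschitz action fixing $p$ are only $(1+\varepsilon)$-bilipschitz finite-order homeomorphisms of $\mathbb{R}^n$ fixing the origin; nothing forces them to be orthogonal (conjugating an orthogonal action by a bilipschitz homeomorphism fixing $0$ already gives non-linear examples, and a blow-up of a wild bilipschitz action could perfectly well remain wild). Second, even granting a nearby linear model, the ``rigidity lemma'' you invoke --- that a finite group action close enough to orthogonal admits a $G$-equivariant straightening at a definite scale --- is not a known result you can cite; it is essentially the whole difficulty of the problem. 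An equivariant straightening of $B(p,r_0)$ onto the linear model would make $M^G$ locally flat, hence tame, and would prove the main theorem outright, bypassing Proposition \ref{tamenesscriterion} entirely; the Bing involution shows such straightening fails without the bilipschitz hypothesis, so any proof of it must do quantitative work of the kind your proposal never supplies. In short, the substantive content of the proposition is deferred to an unproved claim that is at least as strong as the statement to be proved.

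The paper's route is different in kind and deliberately aims much lower than a tubular neighborhood, because the hypotheses of Proposition \ref{tamenesscriterion} are weak: one only needs $M\setminus M^G$ to be the interior of a compact manifold with boundary and the inclusion to extend to a merely continuous (not injective) map of that compactification. This is achieved dynamically: one defines $B(x)$ as the Riemannian center of mass of the orbit $Gx$, takes the Lipschitz vector field $\pmb{v}(x)=\exp_x^{-1}\big(B(x)\big)$ and its flow $\varphi$, and proves the contraction estimate $\|\pmb{v}(\varphi_\tau(x))\|\leq k\,\|\pmb{v}(x)\|$ with explicit constants --- this is where $\varepsilon=\frac{1}{4000}$ is actually used, via elementary center-of-mass inequalities in an almost-flat chart, not via any local rigidity of the action. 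The geometric decay gives uniform convergence of flow lines to points of $M^G$ (which are shown to be genuine fixed points using an almost-invariance estimate for $B$), a level set of the remaining flow length serves as a cross-section $Z$, and the flow parametrizes the end of $M\setminus M^G$ as $Z\times[0,1[$, with the continuous extension to the boundary given by the flow limit. Tameness itself is then obtained separately, by the cell-like approximation arguments of Proposition \ref{tamenesscriterion}. If you want to salvage your plan, you would have to either prove your straightening lemma (hard, and nowhere in the literature at this level of generality) or retreat, as the paper does, to constructing only the collar-plus-continuous-extension data.
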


	This proposition is proved by defining a Lipschitz vector field on $M$ and by showing that the flow of this vector field converges to the fixed set. This allows us to define a product structure on a neighborhood of the fixed set which extends continuously to the latter. The Lipschitz continuity of the action is crucial to define this vector field, and the bound on $\varepsilon$ is used to show the convergence of its flow.
	
	Note that Proposition \ref{lipschitzproposition} works in any dimension with any finite group. The conditions of Theorem \ref{finaltheorem} are imposed by Theorem \ref{Kwasik} and Proposition \ref{tamenesscriterion}.
		
	\begin{remark}
		As Proposition \ref{lipschitzproposition} only uses local arguments, the compactness hypothesis in its statement could be removed with some work. However, we cannot get rid of compactness needed in Theorem \ref{finaltheorem} as it is needed for Theorem \ref{Kwasik} stated in the next part.
	\end{remark}	
		
	\subsection{Acknowledgements}
	
	This work owes a lot to Juan Souto for his significant help and for his unpublished work with Pekka Pankka, which has been an encouraging starting point for this paper.
	
	\section{Proof of Theorem \ref{finaltheorem}}
	
	In this section, we recall some known results and we show how Theorem \ref{finaltheorem} can be reduced to Proposition \ref{tamenesscriterion} and Proposition \ref{lipschitzproposition}.
	
	\subsection{Smith theory}
	
	One of the earliest results in the study of finite-order homeomorphisms of $S^3$ is the determination of the topology of the fixed set by P. A. Smith. He showed (\cite[7.3 Theorem 4]{Sm1939}) that the fixed set of a finite-order homeomorphism of $S^3$ is homeomorphic to a lower dimensional sphere (that is, $S^3$ itself, a 2-sphere, a circle, a pair of points or no fixed points at all). This result can be generalized to any 3-manifold as follows.
	
	\begin{theorem}\label{smiththeory}
		Let $\sigma:M\to M$ be a finite order homeomorphism of a topological 3-manifold $M$.
		The fixed set $M^\sigma$ is a disjoint union of open pieces $(\Sigma_i)_i$ where $\Sigma_i$ is a topological manifold.
	\end{theorem}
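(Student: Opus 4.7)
The plan is to study $M^\sigma$ locally near each fixed point and decompose it by local dimension. First I would reduce to the prime-order case: if $\sigma$ has order $n = p_1^{a_1}\cdots p_k^{a_k}$, then $\langle\sigma\rangle$ admits a composition series with successive quotients of prime order, and $M^\sigma$ is the intersection of the fixed sets of the generators. It therefore suffices to analyze iteratively the fixed set of a single $\mathbb{Z}/p$-action, taking the previous fixed set as the new ambient space.

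The central input is classical Smith--Borel theory: the fixed point set of a $\mathbb{Z}/p$-action on a $\mathbb{Z}/p$-cohomology $n$-manifold is itself a $\mathbb{Z}/p$-cohomology manifold of some dimension $d \leq n$. Since the topological 3-manifold $M$ is a $\mathbb{Z}/p$-cohomology manifold for every prime $p$, iterating this result (together with a small piece of work to see that the intermediate fixed sets remain suitable for further application of the theorem) shows that $M^\sigma$ is locally a cohomology manifold of some local dimension $d(x) \in \{0,1,2,3\}$ at each point $x$.

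Next I would set $\Sigma_i = \{x \in M^\sigma : d(x) = i\}$ and verify that each $\Sigma_i$ is open in $M^\sigma$ using the constancy of local dimension on connected components of a cohomology manifold. The last step is to upgrade from cohomology manifold to topological manifold. For $d=3$, the generator acts as the identity on a neighborhood of the point, so $\Sigma_3$ is open in $M$ and trivially a topological 3-manifold; for $d=0$ and $d=1$ this upgrade is elementary. For $d=2$ it is a classical result going back to Wilder that a 2-dimensional cohomology manifold is a topological surface.

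The principal obstacle is this final upgrade in dimension 2: it is what makes the argument specific to low dimensions, since analogous statements fail in higher dimensions (for instance by double-suspension phenomena). In our 3-dimensional setting the argument closes cleanly because every fixed stratum of positive codimension has dimension at most 2, which is exactly the range where cohomology manifolds are known to coincide with topological manifolds.
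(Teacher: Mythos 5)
Your proposal is correct and follows essentially the same route as the paper: reduce the general finite order to prime orders by iterating through the prime factorization (using that the statement also holds in the lower-dimensional intermediate fixed sets), and invoke Smith theory for the prime-order case. The only difference is that the paper simply cites the prime-order statement (Pardon, Theorem 4.5), whereas you sketch its classical proof via $\mathbb{Z}/p$-cohomology manifolds and the low-dimensional recognition theorem of Wilder, which is exactly the standard argument behind that reference.
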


	A proof of this result can be found, for example, in \cite[Theorem 4.5]{Pa2020}). As in John Pardon's paper, Smith theory is usually stated for prime orders, but Theorem \ref{smiththeory} is still valid for any finite order. Indeed, suppose that we have Theorem \ref{smiththeory} for prime orders, and let $\sigma$ be an homeomorphism of a topological 3-manifold of order $n$, and let $p_1\cdot ...\cdot p_k$ be a decomposition of $n$ into prime factors. The map $\sigma^\frac{n}{p_1}$ is of order $p_1$, so the result of Theorem \ref{smiththeory} applies to the fixed set $M^{\sigma^\frac{n}{p_1}}$.  The map $\sigma^\frac{n}{p_1\cdot p_2}$ is of order $p_2$ on $M^{\sigma^\frac{n}{p_1}}$ and we have the inclusion $M^{\sigma^\frac{n}{p_1\cdot p_2}}\subset M^{\sigma^\frac{n}{p_1}}$, and as Theorem \ref{smiththeory} is also true for dimensions 2 and 1, the result still applies. We can continue this argument until it applies to $\sigma$.
	
	\subsection{Tame embeddings}\label{sectiontameembeddings}
	
	Smith theory describes the topology of the fixed set, but it does not describe how the latter is embedded. And without more restrictions, wild behaviors like the Bing involution can appear.
	
	Let $\Sigma$ be a subset of a topological manifold $M$. If $\Sigma$ is a topological manifold for the topology induced by $M$, we say that $\Sigma$ is a \emph{topological submanifold} of $M$. We say that $\Sigma$ is \emph{tamely embedded} (or simply that $\Sigma$ is tame) if there is a self-homeomorphism of $M$ sending $\Sigma$ to a polyhedron. If there is no such homeomorphism, we say that $\Sigma$ is \emph{wildly embedded} (or simply that $\Sigma$ is wild).
	
	To check if a topological submanifold is tame, one can begin by studying its complement. Indeed, the complement of a tame topological submanifold is homeomorphic to the interior of a compact manifold with boundary. In particular, its fundamental group must be finitely generated. Such a criterion is sufficient to rule out examples like the Alexander horned sphere but is too weak for examples such as the Fox-Artin sphere. 
	
	In dimension 3, a criterion to show that a topological submanifold is tamely embedded is to check if it has a tubular neighborhood. We say that a topological $n$-submanifold $\Sigma$ of a $m$-manifold $M$ \emph{has a tubular neighborhood} if there is a vector bundle $E$ of basis $\Sigma$ and of fibers $\mathbb{R}^{m-n}$ whose total space can be embedded as an open set in $M$ and whose null section is $\Sigma$.
	
	\begin{proposition}\label{locallyflat}
		Let $\Sigma$ be a closed topological submanifold of a 3-manifold $M$. If $\Sigma$ has a tubular neighborhood $E$, then $\Sigma$ is tamely embedded.
	\end{proposition}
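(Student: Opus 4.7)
The plan is to reduce the proposition to the classical tameness of locally flat submanifolds of a $3$-manifold. Recall that $\Sigma$ is locally flat in $M$ if every $p\in\Sigma$ admits an open neighborhood $U\subset M$ together with a homeomorphism $U\to\mathbb{R}^3$ carrying $U\cap\Sigma$ onto $\mathbb{R}^n\times\{0\}$, where $n=\dim\Sigma\in\{0,1,2\}$. The existence of a tubular neighborhood will immediately supply such charts.

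More precisely, given $p\in\Sigma$, I would first choose an open coordinate chart $V\subset\Sigma$ around $p$ with $V\cong\mathbb{R}^n$, small enough that the restricted bundle $E|_V$ is trivializable. Any trivialization yields a homeomorphism $E|_V\cong V\times\mathbb{R}^{3-n}\cong\mathbb{R}^3$ sending the zero section onto $\mathbb{R}^n\times\{0\}$; composing with the open embedding $E\hookrightarrow M$ produces the desired flatness chart at $p$. Since $p$ was arbitrary, $\Sigma$ is locally flat in $M$.

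It then remains to invoke the classical tameness results in dimension $3$. For $n=0$ the claim is trivial. For $n=2$, the line bundle $E$ provides a bicollar of $\Sigma$, and a bicollared surface in a $3$-manifold is tame by combining Brown's collaring theorem with Moise's triangulation theorem. For $n=1$, locally flat circles in a $3$-manifold are tame by Bing's classical results on arcs and circles. In all cases we obtain a self-homeomorphism of $M$ carrying $\Sigma$ onto a subpolyhedron of some triangulation, as required.

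The main obstacle is not the reasoning itself, which is essentially routine once local flatness is in hand, but rather keeping straight three distinct classical statements, one in each codimension, and checking that the hypothesis of a tubular neighborhood used in the proposition genuinely matches the notion of local flatness required by each of these theorems. A cleaner unified argument would instead triangulate $M$ via Moise's theorem and then straighten the bundle $E$ to a regular neighborhood using general position, producing a polyhedral representative of $\Sigma$ directly without splitting into cases.
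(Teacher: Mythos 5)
Your proposal is correct but takes a genuinely different route from the paper. The paper deliberately avoids the case-by-case reduction to classical local-flatness tameness theorems (it calls this the ``direct proof'' and opts against it); instead it cuts $M$ along the unit sphere subbundle of $E$ into two manifolds-with-boundary $A$ and $B$, smooths each so that $\Sigma$ is smooth in $A$, reglues via a diffeomorphism isotopic to the identity (Munkres), and then invokes the uniqueness of smooth structures on 3-manifolds to produce a self-homeomorphism of $M$ making $\Sigma$ smooth. Your approach instead extracts local flatness from the tubular neighborhood and then delegates to Bing--Moise tameness results in each codimension. Both are valid; the paper's gluing argument is codimension-agnostic and leans on one uniform theorem, while yours trades that for three separate classical inputs. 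One small caution on your references: for the codimension-one case, Brown's collaring theorem by itself only upgrades a local bicollar to a global one; the tameness of the resulting bicollared surface is a separate theorem of Bing (and Moise), not a corollary of collaring plus triangulability, so you should cite Bing's side-approximation/local-tameness-implies-tameness results rather than Brown here. You already flag this bookkeeping issue yourself, and it does not affect the soundness of the plan.
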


	One can give a direct proof of this fact, but we prefer to sketch a quicker argument, using the uniqueness of smooth structures in dimension 3.
	
	\begin{proof}
		Let $A$ be the unit disk subbundle of $E$ and let $B=\overline{M\setminus A}$. As $A$ and $B$ have topological product structures on their boundaries, they are topological manifolds with boundaries. We choose any smooth structure on $B$ and we choose a smooth structure on $A$ that makes $\Sigma$ smooth.
		
		For these smooth structures, we can choose a diffeomorphism $g$ from $\partial B$ to $\partial A$ which is isotopic to the identity map (see \cite[Theorem 6.3]{Mu1960}). Gluing $A$ and $B$ along this diffeomorphism gives us a smooth manifold $M'$ homeomorphic to $M$. To define this homeomorphism, remark that $\partial A$ has a neighborhood in $A$ homeomorphic to $\partial A\times [0,1]$. Define a map $h$ from $M'$ homeomorphic to $M$ by sending $B$ on itself via the identity map and, in the collar $\partial A\times [0,1]$, send $(x,t)$ on $\big(g_t(x),t\big)$, where $(g_t)_t$ is an isotopy from $g$ to the identity, and using the identity map elsewhere on $A$. The topological submanifold $\Sigma$ is then a smooth submanifold of $M'$.
		
		As $M$ and $M'$ are homeomorphic and as the smooth structure on a 3-manifold is unique, there is a diffeomorphism $d$ between $M'$ and $M$. The map $d\circ h$ is thus an homeomorphism of $M$ that makes $\Sigma$ smooth. This shows that the submanifold $\Sigma$ is tame.
	\end{proof}

	The fixed set of a smooth action on a 3-manifold is a smooth submanifold. In particular, this fixed set is tamely embedded. It turns out that the tameness of the fixed set for every power of a finite order self-homeomorphism of a 3-manifold is a sufficient condition for this map to be smoothable.
	
	\begin{theorem}\cite[Corollary 2.3]{KwLe1988}\label{Kwasik}
	A topological action of a finite cyclic group $G$ on a closed 3-manifold $M$ is smoothable if and only if, for every subgroup $H$ of $G$, the fixed set $M^H$ is tame.
	\end{theorem}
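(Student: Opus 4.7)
The forward direction of the biconditional is immediate: if the $G$-action is conjugated to a smooth one, then each $M^H$ is the image under a self-homeomorphism of $M$ of a smooth submanifold, and smooth submanifolds are tame by definition.

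For the substantive reverse direction, my plan is to use the tameness of every $M^H$ to equivariantly smooth the action stratum by stratum. The key preliminary step is to equip each fixed stratum with a well-behaved tubular neighborhood: combining Theorem~\ref{smiththeory}, the tameness hypothesis, and Proposition~\ref{locallyflat}, I would first produce for each subgroup $H \leq G$ a topological tubular neighborhood $N_H$ of $M^H$ in $M$. I would then upgrade these to equivariant tubular neighborhoods on which the ambient $G$-action is modelled by a linear representation of the isotropy, by averaging a non-equivariant tube over the finite cyclic action and appealing to local Smith theory to identify the averaged action with its linearization near $M^H$. I also need compatibility along nested strata: if $H \subset K$ then $M^K \subset M^H$, and inside $N_H$ the smaller stratum should appear as a linear subspace of the fibres.

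With these equivariant linear models in hand, I would smooth in two overlapping regions. In each $N_H$ the smooth structure is dictated by the linear model and the action is smooth by construction. On the free part $M \setminus \bigcup_{H \ne 1} M^H$, the action is free with quotient a topological 3-manifold, which carries a unique smooth structure by Moise's theorem; lifting this gives a smooth structure on the free part for which the action is automatically smooth. Finally, I would patch the two smoothings across a collar of each $\partial N_H$, following the uniqueness-of-smooth-structures argument used in the proof of Proposition~\ref{locallyflat}.

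The main obstacle I expect is this last patching in the equivariant setting: the gluing diffeomorphism and the witnessing isotopy need to be $G$-equivariant on each collar, which demands an equivariant refinement of Munkres' isotopy-smoothing result in dimension 3. The patching must moreover be carried out inductively from the deepest stratum $M^G$ outward, so that compatibility along strata of different dimensions (for example, a circle fixed by $G$ sitting inside a surface fixed by a subgroup $H \subsetneq G$) is preserved at each step.
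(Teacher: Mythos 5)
This statement is not proved in the paper at all: it is quoted as an external result, namely \cite[Corollary 2.3]{KwLe1988}, and the paper's ``proof'' is the citation. So the relevant question is whether your sketch would actually establish the hard direction, and as it stands it would not. The forward direction is fine (modulo saying ``tame by a standard triangulation argument'' rather than ``by definition''), but in the reverse direction the two steps you treat as preliminary manoeuvres are precisely the substantive content of the Kwasik--Lee theorem. First, tameness of each $M^H$ together with Proposition~\ref{locallyflat} only gives non-equivariant information; there is no averaging procedure that turns a non-equivariant tubular neighborhood into a $G$-invariant one (one can average metrics or functions, not bundle structures or neighborhoods), and ``local Smith theory'' does not identify the action near $M^H$ with a linear model --- Smith theory constrains the homology and local separation properties of the fixed set, not local linearity of the action. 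Passing from ``all fixed sets are tamely embedded'' to ``the action is locally linear'' is a genuine theorem requiring 3-dimensional taming/approximation technology, and your outline assumes it.

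Second, the patching step is also not available off the shelf: you correctly flag that you would need an equivariant refinement of Munkres' smoothing-of-homeomorphisms/isotopy results on the collars $\partial N_H\times[0,1]$, compatible along nested strata, but you give no argument for it, and the non-equivariant uniqueness of smooth structures used in Proposition~\ref{locallyflat} does not equivariantize for free (the gluing diffeomorphism and the isotopy must commute with the $G$-action on the collar, which is an additional constraint of exactly the kind that makes equivariant smoothing theory delicate). In short, your outline reproduces the expected architecture (equivariant linear models near the strata, Moise on the free part, equivariant gluing), but the two load-bearing steps --- tame fixed sets imply locally linear, and locally linear actions on closed 3-manifolds are smoothable with equivariant patching --- are left unproven, and they are the reason the paper cites \cite{KwLe1988} rather than arguing this directly.
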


	Note that the tameness of the global fixed set $M^G$ is not sufficient. For example, consider the disjoint union of three 3-spheres $S^3\bigsqcup S^3\bigsqcup S^3$. Let $r$ be a circular permutation exchanging these three copies, and let $s$ be a wild involution on each 3-sphere such that $r$ and $s$ commute. The group generated by $r$ and $s$ is cyclic of order six but its action has an empty (and thus tame) global fixed set, and this action is not smoothable.

	\subsection{Reducing Theorem \ref{finaltheorem} to two propositions}\label{finalproof}
	
	Theorem \ref{finaltheorem} can be reduced to Proposition \ref{tamenesscriterion} and Proposition \ref{lipschitzproposition} stated in the introduction.

	\begin{proof}[Proof of Theorem \ref{finaltheorem}]
		Let $G$ be a finite cyclic group acting by $(1+\varepsilon)$-bilipschitz homeomorphisms on a closed 3-manifold. Each subgroup $H$ of $G$ is cyclic and acts by $(1+\varepsilon)$-bilipschitz homeomorphisms. By Proposition \ref{lipschitzproposition} and Proposition \ref{tamenesscriterion}, the fixed sets of these subgroups are tame. Theorem \ref{Kwasik} then applies, implying that the action of $G$ is smoothable.
	\end{proof}
	
	Section \ref{parttame} will be dedicated to the proof of Proposition \ref{tamenesscriterion}, and Section \ref{partlipschitz} to Proposition \ref{lipschitzproposition}.
	
	\section{The tameness criterion}\label{parttame}
	
	This section is dedicated to the proof of our tameness criterion.

	\tamenesscriterion*

	With the notations and setting of Proposition $\ref{tamenesscriterion}$, we denote by $f$ the continuous map from $\partial X$ to $M$ defined by the extension of $i$. Remark that $f$ is a surjective map from $\partial X$ to $\Sigma$. The boundary $\partial X$ has a neighborhood in $X$ homeomorphic to $\partial X\times[0,1]$ (see \cite{Br1962}). The subset $\Sigma$ thus has a closed neighborhood $U$ in $M$ such that $U\setminus\Sigma$ is homeomorphic to $\partial X\times[0,1[$ via an homeomorphism which extends to a continuous map $\pi$ from $\partial X\times[0,1]$ to $U$.
	\begin{alignat*}{5}
		\pi:~ & \partial X\times[0,1[&& ~\longrightarrow && ~U\setminus\Sigma ~~~&&\text{is an homeomorphism}\\
		\pi:~ & \partial X\times\{1\}&& ~\longrightarrow && ~\Sigma &&\text{is the continuous map $f$}
	\end{alignat*}

	The neighborhood $U$ of $\Sigma$ is then homeomorphic to the quotient of $\partial X\times[0,1]$ by $\pi$ (that is, by gluing the points of $\partial X\times\{1\}$ having the same image by $f$).
	
	\[U\simeq\faktor{\partial X\times[0,1]}{\pi}\]
	
	The topological submanifold $\Sigma$ is a finite disjoint union of connected manifolds of possibly different dimensions. We will study these connected components separately. As the components of dimension 0 and 3 are automatically tame, the only cases of interest are the components of dimension 1 and 2.
	
	\subsection{First case : $\Sigma$ is a surface}\label{sectionsurface}
	
	In this subsection, we suppose that the topological submanifold $\Sigma$ of $M$ is a connected surface.
	
	To show that $\Sigma$ is tamely embedded, we want to show that it has a tubular neighborhood, as explained in Section \ref{sectiontameembeddings}. Morally, the map $\pi$ is not an homeomorphism due to two obstructions. The first is the fact that the map $f$ should be of degree two, as $\Sigma$ should locally have two sides. The second is that the preimages of $f$ are not necessarily discrete (for example, $f$ can shrink a disk of  $\partial X$ to a point).
	
	\begin{center}
		\includegraphics[width=\textwidth]{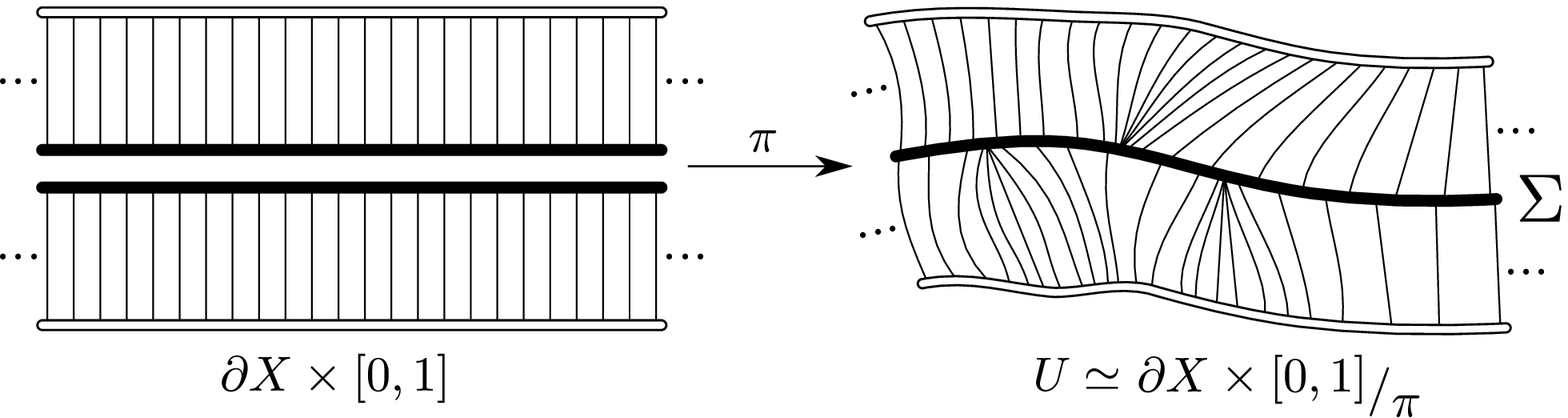}
	\end{center}

	We will deal with the first obstruction by defining a double cover $\tilde\Sigma$ of $\Sigma$ together with a new map $\tilde f$ from $\partial X$ to $\tilde\Sigma$ giving raise to a new map $\tilde\pi$ defining a new quotient space $\tilde U$.
	
	\begin{center}
		\includegraphics[width=\textwidth]{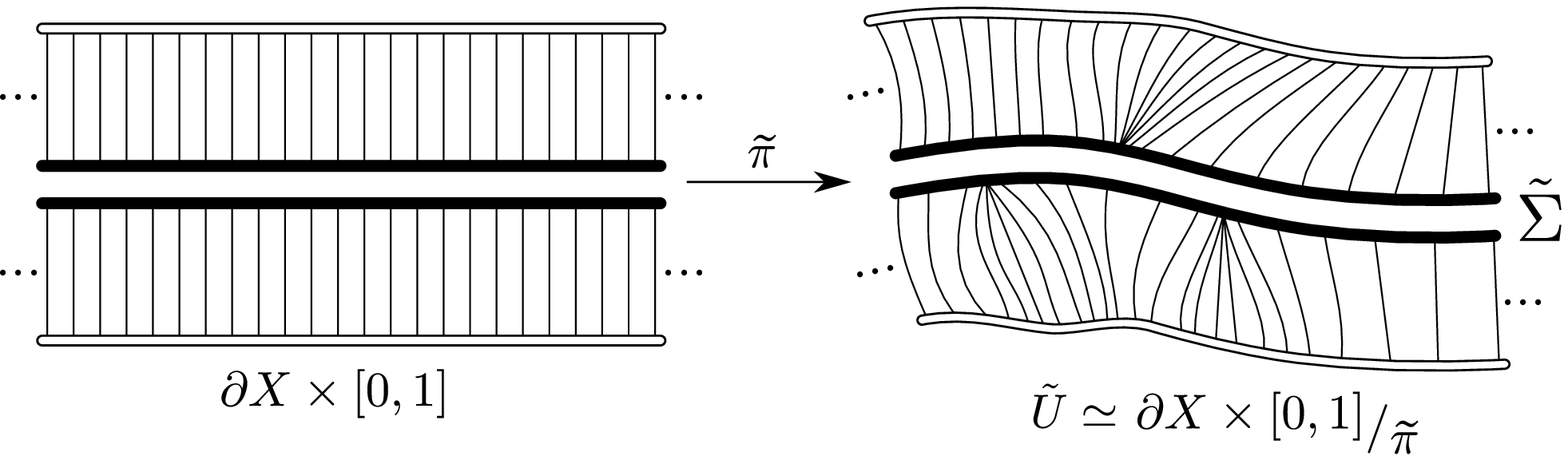}
	\end{center}

	We will deal with the second obstruction by approximating the map $\tilde f$ by homeomorphisms.

	As we do not give conditions on the orientability of $\Sigma$ and $M$, we do not know if $\Sigma$ cuts $U$ into one or two connected components. However, a local version of the Jordan-Brouwer separation theorem determines this behavior at a local sale. This theorem is a direct consequence of \cite[Theorem 2]{LE2018}.
	
	\begin{theorem}[Local Jordan-Brouwer separation theorem]\label{localjordan}
		Let $S$ be a closed topological $(n-1)$-submanifold of a $n$-manifold $M$. Then every point $x$ of $S$ has a basis of neighborhoods $(V_k^x)_k$ in $M$ such that $V_k^x\setminus S$ has exactly two connected components which both approach $x$ arbitrary closely.
	\end{theorem}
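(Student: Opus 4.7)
The claim is local at $x$, so my first move would be to pass to a coordinate chart and reduce to the case $M=\mathbb{R}^n$ with $x$ at the origin. The content of the cited theorem [LE2018, Theorem 2] is a global Jordan--Brouwer-type separation statement for closed topological $(n-1)$-submanifolds of $\mathbb{R}^n$, and my plan is to deduce the local neighborhood-basis version from it by a capping-off and shrinking argument.

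Concretely, I would first use the manifold topology of $S$ to pick a neighborhood $W$ of $x$ in $S$ homeomorphic to $\mathbb{R}^{n-1}$, and then choose a small open ball $V$ around $x$ in $\mathbb{R}^n$ such that $\overline{V}\cap S$ is a relatively compact subset of $W$. This way $S$ cannot re-enter $V$ outside of the local sheet $W$. Next I would cap off $S\cap V$ by gluing on an $(n-1)$-sphere cap in general position outside $V$, producing a globally closed topological $(n-1)$-submanifold $\Sigma'\subset\mathbb{R}^n$ that coincides with $S$ inside a smaller ball $V'\Subset V$. Applying the cited theorem to $\Sigma'$ then gives exactly two components of $\mathbb{R}^n\setminus\Sigma'$, and intersecting with $V'$ yields two components of $V'\setminus S$. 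The local manifold structure of $S$ at $x$ ensures that no finer splitting can occur, since a connected neighborhood of $x$ in $S$ borders each of the two global components. Iterating with a nested sequence $V'_k$ shrinking to $x$ then produces the claimed basis $(V_k^x)_k$.

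The assertion that both components approach $x$ arbitrarily closely comes for free from the local manifold structure: the two sides of the sheet $W$ near $x$ are disjoint in $\mathbb{R}^n$ and each has $x$ in its closure, so both components of $V_k^x\setminus S$ must accumulate at $x$.

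The principal technical obstacle I expect is the capping-off step itself, namely extending $S\cap V$ to a closed topological $(n-1)$-submanifold of $\mathbb{R}^n$ without introducing any new wild points along the seam. This should be achievable by using the product chart $W\cong\mathbb{R}^{n-1}$ to interpolate against a round sphere inside a collar of $\partial V$; the crucial input is that $W$ carries an intrinsic topological product structure even when its embedding in $\mathbb{R}^n$ is wild. Making this interpolation topologically clean is the most delicate part of the argument, but everything else reduces to bookkeeping once the global separation theorem is in hand.
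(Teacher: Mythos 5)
Your reduction hinges on the capping-off step, and that step is a genuine gap rather than a technicality. ``General position'' and interpolation ``inside a collar of $\partial V$'' are smooth or PL tools; here $S$ is only a topological submanifold and may be wildly embedded, so no transversality is available and $S\cap\partial V$ can be an arbitrary closed subset of the sphere $\partial V$ (not an $(n-2)$-manifold), leaving no seam along which a cap could be glued. Cutting out instead an intrinsic disk $D\subset W$ does not help: its boundary is then a possibly wildly embedded $(n-2)$-sphere in $\mathbb{R}^n$, and producing a second disk meeting $D$ only along $\partial D$ so that the union is an embedded closed hypersurface is exactly the kind of taming statement one cannot assume. The intrinsic product structure of $W\cong\mathbb{R}^{n-1}$ says nothing about how $W$ sits in the ambient manifold; the absence of a bicollar for wild embeddings (Alexander sphere, Fox--Artin examples) is precisely the difficulty this whole paper is about, so it cannot be invoked to make the seam ``topologically clean.''

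Even granting a closed hypersurface $\Sigma'$ agreeing with $S$ near $x$, the endgame does not go through. Knowing that $\mathbb{R}^n\setminus\Sigma'$ has exactly two components does not give that $V'\setminus S$ has exactly two: intersecting a complementary domain with a small round ball can, and at wild points typically does, produce infinitely many components, which is why the theorem asserts the existence of a carefully chosen neighborhood basis rather than claiming that all small balls work. Moreover, your justification that both components approach $x$ (``the two sides of the sheet $W$ near $x$ are disjoint and each has $x$ in its closure'') presupposes that $S$ is locally two-sided at $x$, which is essentially the conclusion being proved, so that part is circular. For comparison, the paper does not argue by capping at all: it simply invokes \cite[Theorem 2]{LE2018}, a separation theorem for closed topological hypersurfaces that already carries the local two-component and accessibility information, and the neighborhood-basis statement is read off from it. A self-contained alternative would go through local homology or \v{C}ech--Alexander duality for the pair $(M,M\setminus S)$ near $x$, not through a reduction to the global Jordan--Brouwer theorem for compact hypersurfaces.
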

	
	We begin by unfolding $\Sigma$ to a surface $\tilde{\Sigma}$ with underlying set
	\[\tilde\Sigma=\bigcup_{x\in\Sigma}\lim_{\leftarrow}\pi_0(\mathcal{U}_x\setminus\Sigma).\]
	Here, the inverse limit is taken over the neighborhoods $\mathcal{U}_x$ of $x$ in $M$.
	
	Theorem \ref{localjordan} tells us that each point of $\Sigma$ has a basis of open neighborhoods $(V_k^x)_k$ in $M$ such that $V_k^x\setminus\Sigma$ has exactly two connected components. As each component of $V_k^x\setminus\Sigma$ approaches $x$ arbitrary closely for every $k$, the inclusion map from $\pi_0(V_k^x\setminus\Sigma)$ to $\pi_0(V_{k-1}^x\setminus\Sigma)$ is a bijection. Then, for each point $x\in\Sigma$,  $\underset{\leftarrow}{\lim}~\pi_0(\mathcal{U}_x\setminus\Sigma)$ has two elements.
	
	Let $y\in V_k^x\bigcap\Sigma$. Each element of $\underset{\leftarrow}{\lim}~\pi_0(\mathcal{U}_y\setminus\Sigma)$ naturally maps to one of the components of $V_k^x\setminus\Sigma$. The subset $\tilde V$ defined by
	\[\tilde V=\bigcup_{x\in V_k^x\bigcap \Sigma}\lim_{\leftarrow}\pi_0(\mathcal{U}_x\setminus\Sigma)\]
	separates into two subsets, depending on the component of $V_k^x\setminus\Sigma$ to which its elements map. We endow $\tilde{\Sigma}$ with the topology for which each of these subsets are open neighborhoods, for every $x$ and every $k$. This makes the projection map $p:\tilde{\Sigma}\rightarrow\Sigma$ a double covering space of $\Sigma$.
	
	For a point $y\in\partial X$, the sequence $\pi(y,1-\frac{1}{n})$ ends up in only one component of $V_k^{f(y)}$, where $\pi$ is the map defined in the beginning of Section \ref{parttame}. This defines a new continuous map $\tilde f:\partial X\mapsto\tilde\Sigma$. This allows us to define a new quotient $\tilde U$ of $\partial X\times[0,1]$ by gluing the points of $\partial X\times\{1\}$ having the same image by $\tilde f$ instead of $f$. We denote by $\tilde\pi$ the quotient map from $\partial X\times[0,1]$ to $\tilde U$.
	
	\[\tilde U = \faktor{\partial X\times[0,1]}{\tilde \pi}\]
	
	The surface $\tilde\Sigma$ will be identified with the subset $\tilde\pi(\partial X\times\{1\})$. Note that the quotient of $\tilde U$ by the covering map $p:\tilde{\Sigma}\rightarrow\Sigma$ is homeomorphic to $U$.\\
	
	The map $\tilde f$ is not an homeomorphism, but is approximable by homeomorphisms. To prove this, we will show that $\tilde f$ is \emph{cell-like}, meaning that its fibers $\tilde f^{-1}(\{x\})$ are null-homotopic in each of their neighborhoods.
	
	\begin{lemma}\label{celllike}
		The map $\tilde f$ is cell-like.
	\end{lemma}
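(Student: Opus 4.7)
The plan is to show that for every $\tilde x \in \tilde\Sigma$ and every open neighborhood $W$ of $F := \tilde f^{-1}(\tilde x)$ in $\partial X$, the inclusion $F \hookrightarrow W$ is null-homotopic. Write $x := p(\tilde x) \in \Sigma$. The strategy will use Theorem~\ref{localjordan} to produce a nested basis of neighborhoods of $F$ coming from the collar $\pi$, and then contract $F$ using the cone-like structure of $\tilde U$ near $\tilde x$.

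First, I will apply Theorem~\ref{localjordan} to choose a basis $(V_k)_k$ of small open topological $3$-ball neighborhoods of $x$ contained in $U$, each satisfying $V_k \setminus \Sigma = V_k^+ \sqcup V_k^-$ with $V_k^+$ the component on the $\tilde x$-side. Since the continuous extension of $\pi$ to $\partial X \times [0,1]$ sends the compact set $F \times \{1\}$ to the single point $\{x\}$, a tube-lemma argument yields, for each $k$, a level $s_k \in (0,1)$ with $\pi(F \times [s_k,1)) \subset V_k^+$; here I will use the connectedness of the path $s \mapsto \pi(y,s)$ in $V_k \setminus \Sigma$ together with the very definition of $\tilde f(y) = \tilde x$. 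Then $N_k := \{y \in \partial X : \pi(y, s_k) \in V_k^+\}$ is an open neighborhood of $F$ in $\partial X$, homeomorphic via $\pi(\cdot, s_k)$ to an open subset of $V_k^+$.

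Next, I will check that $(N_k)_k$ is a neighborhood basis of $F$ in $\partial X$: if $W \supset F$ is open and no $N_k$ lies in $W$, a sequence $y_k \in N_k \setminus W$ has, by compactness of $\partial X$, a subsequence converging to some $y_\infty \in \partial X \setminus W$; since $\pi(y_k, s_k) \in V_k^+$ tends to $x$, continuity gives $f(y_\infty) = x$, and continuity of the lift $\tilde f$ combined with the fact that $\pi(y_k, s_k)$ lies on the $\tilde x$-sheet of the double cover forces $\tilde f(y_\infty) = \tilde x$, hence $y_\infty \in F \subset W$, contradicting $y_\infty \notin W$. Some care will be required to handle the disjoint fiber $\tilde f^{-1}(p^{-1}(x) \setminus \{\tilde x\})$, which I will separate from $F$ using the normality of $\partial X$.

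Finally, for $k$ large enough that $N_k \subset W$, the null-homotopy of $F$ in $N_k$ will come from the mapping-cylinder description $\tilde U = \partial X \times [0,1]/\tilde\pi$: the subset $\tilde\pi(N_k \times [s_k, 1])$ is the mapping cylinder of $\tilde f|_{N_k} : N_k \to \tilde f(N_k) \subset \tilde\Sigma$, and it deformation retracts onto its target, thereby contracting $F = \tilde f^{-1}(\tilde x) \cap N_k$ to the point $\tilde x$. The hard part will be transporting this contraction from $\tilde U$ back to a null-homotopy living inside $N_k \subset \partial X$: I plan to use the homeomorphisms $\tilde\pi(\cdot, s) : \partial X \to \tilde\pi(\partial X \times \{s\})$ for $s \in [s_k, 1)$ to identify the slice fibers of the cone with subsets of $\partial X$, and then show that the resulting homotopy extends continuously at $s = 1$ by collapsing to a fixed base point of $F$. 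Establishing this continuity at the apex is the main technical obstacle, and is precisely what makes cell-likeness (rather than mere surjectivity of $\tilde f$) the right notion for the approximation-by-homeomorphisms argument to follow.
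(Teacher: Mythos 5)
Your reduction to a neighborhood basis $(N_k)_k$ of the fiber $F=\tilde f^{-1}(\tilde x)$ is fine (and parallels what the paper needs implicitly), but the heart of your argument has a genuine gap: contracting $F$ inside the mapping cylinder $\tilde\pi(N_k\times[s_k,1])$ proves nothing, because the mapping cylinder of \emph{any} map deformation retracts onto its target, so $F$ is contractible there regardless of whether $\tilde f$ is cell-like. Cell-likeness requires a null-homotopy of $F$ inside a neighborhood $N_k\subset\partial X$ itself, and the step you defer --- transporting the cylinder contraction back through the slices $\tilde\pi(\cdot,s)$ and extending it continuously at the apex $s=1$ --- is exactly where the content lies: extending at the apex amounts to continuously selecting $\tilde f$-preimages along the contraction paths, a selection problem which fails in general (if $\tilde f$ collapsed an essential circle of $\partial X$ to a point, your scheme would run verbatim up to that step, yet the map is not cell-like). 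So the ``main technical obstacle'' you flag is not a technicality; as set up, the argument is circular, and nothing in your outline uses the ambient $3$-manifold beyond Theorem~\ref{localjordan}, which is not enough to rule out such fibers.

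The paper's proof supplies precisely the missing ingredient. It takes a decreasing basis of open disks $U_i\subset\tilde\Sigma$ with $\bigcap_i U_i=\{\tilde x\}$ and shows each preimage $\tilde f^{-1}(U_i)$ is itself an open disk: by Lemmas~\ref{fiber} and~\ref{fiberprop} it is a connected surface with one end, and if it were not simply connected one could find curves $\gamma_1,\gamma_2$ in it with nonzero mod~$2$ intersection number; pushing $\gamma_1$ slightly into the collar and capping off $\pi(\gamma_2\times[0,1])$ inside the disk $U_i$ produces a curve and a surface in the ambient $3$-manifold whose mod~$2$ intersection number is simultaneously $1$ and $0$, a contradiction. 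Since the $\tilde f^{-1}(U_i)$ form a neighborhood basis of the fiber and each is a disk, the fiber is null-homotopic in every neighborhood. In short, you need an argument of this kind --- using the surface/$3$-manifold topology to control the homotopy type of preimages of small disks in $\partial X$ --- rather than a cone argument carried out in the quotient $\tilde U$.
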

	
	To show lemma \ref{celllike}, we begin by proving Lemma \ref{fiber} and Lemma \ref{fiberprop}.

	\begin{lemma}\label{fiber}
	 	For any point $x\in\tilde\Sigma$, the fiber $\tilde f^{-1}(\{x\})$ is connected.
	\end{lemma}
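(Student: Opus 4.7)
The plan is to show that any two points $y_1, y_2 \in \tilde f^{-1}(\{x\})$ lie in a common connected compact subset of this fiber. Writing $\bar x = p(x)$, I would first fix a shrinking basis of neighborhoods $(V_k)_k$ of $\bar x$ in $M$ with $V_k\setminus\Sigma$ having two components, letting $V_k^+$ denote the one corresponding to the lift $x$. Since $y_i \in \tilde f^{-1}(\{x\})$, for each $k$ one can pick $s_k<1$ close enough to $1$ that both $\pi(y_1,s_k)$ and $\pi(y_2,s_k)$ lie in $V_k^+$. The open set $V_k^+\subset M$ is connected, hence path-connected, so I would join the two points by a path $\gamma_k$ inside $V_k^+$, lift $\gamma_k$ through the homeomorphism $\pi\colon \pi^{-1}(V_k^+)\to V_k^+$, and project to $\partial X$ to obtain a compact connected set $K_k\subset\partial X$ that contains $y_1$ and $y_2$ and every point of which admits a lift $(y,t)\in\pi^{-1}(V_k^+)$.

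Next I would pass to the limit via $L_k=\overline{\bigcup_{\ell\ge k}K_\ell}$. Each union $\bigcup_{\ell\ge k}K_\ell$ is connected because every $K_\ell$ contains the common pair $\{y_1,y_2\}$, so $L_k$ is compact and connected; the sequence $(L_k)_k$ is decreasing, so $K_\infty:=\bigcap_k L_k$ is a compact connected subset of $\partial X$ still containing $y_1,y_2$. The first substantive claim is that $K_\infty\subset f^{-1}(\{\bar x\})$: given $y\in K_\infty$, a diagonal extraction produces $y_m\in K_{\ell_m}$ with $\ell_m\to\infty$ and $y_m\to y$; choosing $t_m\in[0,1]$ with $\pi(y_m,t_m)\in V_{\ell_m}^+$ and passing to a subsequence with $t_m\to t_\infty$, one has $\pi(y_m,t_m)\to\bar x$ because $V_{\ell_m}^+$ shrinks to $\{\bar x\}$. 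Continuity of $\pi$ then gives $\pi(y,t_\infty)=\bar x$, and since $\pi^{-1}(\Sigma)=\partial X\times\{1\}$ this forces $t_\infty=1$ and $f(y)=\bar x$.

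The second, easier, point uses the continuity of $\tilde f$ established when it was introduced. Letting $x'$ denote the other lift of $\bar x$ in $\tilde\Sigma$, both $\tilde f^{-1}(\{x\})$ and $\tilde f^{-1}(\{x'\})$ are closed in $\partial X$ and disjoint, and their union is $f^{-1}(\{\bar x\})$, so each is clopen in the subspace $f^{-1}(\{\bar x\})$. The connected set $K_\infty$ meets $\tilde f^{-1}(\{x\})$ at $y_1$, so it lies entirely inside $\tilde f^{-1}(\{x\})$, producing the desired connected subset joining $y_1$ and $y_2$. The main obstacle is the uniformity argument showing $K_\infty\subset f^{-1}(\{\bar x\})$; trying instead to show directly that $K_k\subset\tilde f^{-1}(\{x\})$ fails, because the projected paths can traverse points whose vertical $\pi$-paths actually approach $\Sigma$ from the opposite side, and it is the clopen-partition trick that ultimately bypasses this wildness.
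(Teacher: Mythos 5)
Your argument is correct, but it follows a genuinely different route from the paper's. The paper argues by contradiction: it assumes the fiber splits into two non-empty compacta $A_1\sqcup A_2$, introduces the equidistant set $Y=\{y:\ d(y,A_1)=d(y,A_2)\}$, and uses that the sets $\tilde\pi^{-1}(C_k\setminus\tilde\Sigma)$ (with $C_k$ the closed one-sided neighborhoods of $x$ in $\tilde U$ coming from Theorem \ref{localjordan}) are connected, so each $\tilde\pi^{-1}(C_k)$ must meet $Y$; the finite intersection property then puts a point of the fiber inside $Y$, a contradiction. You instead work downstairs in $M$ and build, for two arbitrary fiber points $y_1,y_2$, paths in the one-sided components $V_k^+$, lift them through $\pi$, project to $\partial X$, and take the nested intersection $K_\infty$ of the resulting connected compacta; the limiting argument shows $K_\infty\subset f^{-1}(\{\bar x\})$, and the clopen decomposition $f^{-1}(\{\bar x\})=\tilde f^{-1}(\{x\})\sqcup\tilde f^{-1}(\{x'\})$ (using continuity of $\tilde f$ and that $\tilde\Sigma$ is a genuine, hence Hausdorff, double cover) places $K_\infty$ in the correct sheet. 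Both proofs ultimately rest on the same input, the local Jordan--Brouwer theorem giving one-sided connected neighborhoods; the paper's contradiction argument is shorter and, by working upstairs in $\tilde U$, never has to verify a limit statement nor separate the two lifts, while yours is constructive (it exhibits a connected subset of the fiber through any two given points) and avoids the equidistant-set trick. Two small points you leave implicit and should record: take the basis $(V_k)_k$ open, nested (or of diameter tending to $0$) and contained in $U$, so that $V_k^+\subset U\setminus\Sigma$ and the lift through the homeomorphism $\pi\colon\partial X\times[0,1[\to U\setminus\Sigma$ is actually defined, and so that points of $V_{\ell_m}^+$ converge to $\bar x$ as $\ell_m\to\infty$; neither affects the substance of the argument.
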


 	\begin{proof}
 		From Theorem \ref{localjordan}, every point of $\tilde\Sigma$ has a basis of closed neighborhoods $(C_k)_{k}$ in $\tilde U$ such that $C_k\setminus \tilde\Sigma$ is connected.
			
		Suppose that the fiber $\tilde \pi^{-1}(\{x\})\subset\partial X\times\{1\}$ can be written as a disjoint union of two non-empty compact sets $A_1$ and $A_2$. Choosing a distance $d$ on $\partial X\times[0,1]$, the closed set $Y$ defined by
		\[{Y=\big\{y\in \partial X\times[0,1]~|~\text{d}(y,A_1)=\text{d}(y,A_2)\big\}}\]
		is disjoint from $A_1$ and $A_2$. The sets $\tilde \pi^{-1}(C_k)$ are a basis of closed neighborhoods of $\tilde \pi^{-1}(\{x\})$, and the sets $\tilde \pi^{-1}(C_k\setminus\tilde\Sigma)$ are homeomorphic to the sets  $C_k\setminus\tilde\Sigma$ and are thus connected. The sets $\tilde \pi^{-1}(C_k)$ thus always intersect $Y$ and the intersections $\tilde \pi^{-1}(C_k)\cap Y$ are therefore closed and non-empty. Finally, the intersection $\bigcap_k \tilde \pi^{-1}(C_k)\cap Y$ is non-empty and contained in $\bigcap_k \tilde \pi^{-1}(C_k)=\tilde \pi^{-1}(\{x\})$ and in $Y$, which is impossible since $Y$ is disjoint from $A_1$ and $A_2$. The fiber $\tilde \pi^{-1}(\{x\})=\tilde f^{-1}(\{x\})$ is then connected.
	\end{proof}
	
	\begin{lemma}\label{fiberprop}
		Let $g:A\rightarrow B$ be a continuous surjective map with connected fibers between compact Hausdorff spaces. Let $B'\subset B$ and $A'=g^{-1}(B')$. If $B'$ is connected, then $A'$ is also connected. And if $B'$ is non-separating in $B$, then $A'$ is also non-separating in $A$.
	\end{lemma}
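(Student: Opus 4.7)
The main observation is that $g\colon A\to B$, being a continuous map from a compact space to a Hausdorff space, is automatically a closed map. This is what will let me transfer connectedness information across $g$ in both directions, using the connected-fibers hypothesis to glue fibers into the picture.

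For the connectedness statement, my plan is to argue by contradiction: suppose $A'=P\sqcup Q$ is a separation, with $P,Q$ nonempty and (relatively) closed in $A'$. Since each fiber $g^{-1}(b)$ with $b\in B'$ is connected, it must lie entirely in $P$ or entirely in $Q$; so setting $B'_P=g(P)$ and $B'_Q=g(Q)$ gives a disjoint partition $B'=B'_P\sqcup B'_Q$. The heart of the argument is then to show that $B'_P$ and $B'_Q$ are closed in $B'$, which will contradict the connectedness of $B'$. For this I take the closure $\bar P$ of $P$ in $A$; since $g$ is closed, $g(\bar P)$ is closed in $B$, so it suffices to verify the equality $g(\bar P)\cap B' = B'_P$. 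The nontrivial inclusion is $\subseteq$: if $b=g(a)$ with $a\in\bar P$ and $b\in B'$, then $a\in g^{-1}(B')=A'$, and because $P$ is closed in $A'$ the point $a$ must actually belong to $P$, giving $b\in B'_P$. Intersecting with $B'$ then exhibits $B'_P$ as a closed subset of $B'$, and symmetrically for $B'_Q$.

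For the non-separating statement, the plan is to reduce it to the connectedness statement already proved. Note that $g^{-1}(B\setminus B')=A\setminus A'$, so if $B\setminus B'$ is connected (which is what ``non-separating'' means), applying the first part of the lemma with $B\setminus B'$ in place of $B'$ immediately yields that $A\setminus A'$ is connected, i.e.\ $A'$ is non-separating in $A$. The hypotheses carry over without change: $g$ is still continuous, surjective, with connected fibers, between the same compact Hausdorff spaces.

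The most delicate point, and the only place where I expect any care to be needed, is the verification $g(\bar P)\cap B'=B'_P$. The potential worry is that $\bar P$ might meet $A\setminus A'$, but points of $\bar P$ that map into $B'$ are forced back into $A'$ by the definition $A'=g^{-1}(B')$, and then the relative closedness of $P$ in $A'$ finishes the job. Everything else is formal manipulation of closed maps and connected fibers.
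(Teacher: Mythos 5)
Your proof is correct and follows essentially the same route as the paper's: argue by contradiction from a separation of $A'$, use compactness and the Hausdorff property to make $g$ a closed map, use connected fibers to sort each fiber onto one side, and reduce the non-separating claim to the connectedness claim via $g^{-1}(B\setminus B')=A\setminus A'$. The only difference is cosmetic: you exhibit $g(P)$ and $g(Q)$ as a disjoint closed cover of $B'$, whereas the paper extracts a point of $g(\overline{A_1})\cap g(\overline{A_2})\cap B'$ and contradicts fiber connectedness there.
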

	
	\begin{proof}
		We consider that $B'$ is connected. Suppose that $A'$ is the disjoint union of two non-empty subsets $A_1$ and $A_2$ closed in $A'$. This means that $\overline{A_1}$, the closure of $A_1$ in $A$, does not intersect $A_2$ and that $\overline{A_2}$, the closure of $A_2$ in $A$, does not intersect $A_1$. As $A$ and $B$ are Hausdorff and compact, $g(\overline{A_1})$ and $g(\overline{A_2})$ are closed in $B$. As $B'$ is connected, the sets $g(\overline{A_1})\bigcap B'$ and $g(\overline{A_2})\bigcap B'$ cannot be disjoints. So, let $x$ be an element of $g(\overline{A_1})\bigcap g(\overline{A_2})\bigcap B'$. The fiber $g^{-1}(x)$ is connected and then cannot intersect both $A_1$ and $A_2$. We can consider without loss of generality that it is contained in $A_1$. However, as $x$ is in $g(\overline{A_2})$, there is an element $y$ of $\overline{A_2}$ such that $g(y)=x$. But $g^{-1}(x)$ is contained in $A_1$ and we saw that $A_1$ and $\overline{A_2}$ were disjoint. So this situation is impossible, which means that $A'$ must be connected.
		
		We now consider that $B'$ is a non-separating. This means that $B\setminus B'$ is connected. Using the first part of this lemma, we obtain that $A\setminus A'=g^{-1}(B\setminus B')$ is connected. So $A'$ is non-separating.
	\end{proof}
	
	Now, we can show that $\tilde f$ is cell-like.
	
	\begin{proof}[Proof of Lemma \ref{celllike}]
		Let $x$ be a point of $\tilde\Sigma$. Let $(U_i)_{i\in\mathbb{N}}$ be a decreasing sequence of open disks in $\tilde\Sigma$ whose intersection is $\{x\}$. We want to show that the sets $\tilde f^{-1}(U_i)$ are also homeomorphic to disks. The set $\tilde f^{-1}(\{x\})$ will then be the intersection of a decreasing family of disks, which implies that the map $\tilde f$ is cell-like.
		
		First, we know that $\tilde f^{-1}(U_i)$ is a connected surface thanks to Lemma \ref{fiberprop}. This surface has only one end. Indeed, $U_i$ is an increasing union of non-separating compacts set, and so is $\tilde f^{-1}(U_i)$ by Lemma \ref{fiberprop}.
		
		Consider the subset $\pi\big(f^{-1}\big(p(U_i)\big)\times[0,1]\big)$ of $M$, where $p:\tilde{\Sigma}\rightarrow\Sigma$ is the cover map. This neighborhood is a manifold since its boundary $\pi\big(f^{-1}\big(p(U_i)\big)\times\{0\}\big)$ has a collared neighborhood $\pi\big(f^{-1}\big(p(U_i)\big)\times[0,\varepsilon]\big)$. Endow this manifold with a smooth structure so that the map $\pi$ is smooth on this collar. Suppose that $\tilde f^{-1}(U_i)$ is not a disk, it is then homeomorphic to a non-simply connected compact surface with a point removed. We can then find two smooth curves $\gamma_1$ and $\gamma_2$ in $\tilde f^{-1}(U_i)$ with a non-zero modulo 2 intersection number. Let us define the smooth curve $\gamma'=\pi(\gamma_1\times\{\frac{\varepsilon}{3}\})$ and a smooth surface $S'$ in the following way. First, we define a topological surface $S$  by two pieces $S_1$ and $S_2$. The first piece $S_1$ is $\pi(\gamma_2\times [0,1])$. The curve $\pi(\gamma_2\times\{1\})$ is contained in $U_i$ and can be contracted into a point, as $U_i$ is a disk. This homotopy defines the second piece $S_2$. This surface $S$ defined by the two pieces $S_1$ and $S_2$ can be approximated by a smooth surface $S'$ without being modified on $\pi(\gamma_2\times[0,\frac{2\varepsilon}{3}])$. Finally, $\gamma'$ and $S'$ have an intersection number of 1, but this is not possible since $\gamma'$ can also be homotoped to $U_i$ and then contracted to a point in the same way we constructed $S$. So $\tilde f^{-1}(U_i)$ is a disk.
	\end{proof}

	We can now prove Proposition \ref{tamenesscriterion}.

	\begin{proof}[Proof of Proposition \ref{tamenesscriterion} if $\Sigma$ is a surface]
		A version of Moore's theorem \cite[25 Corollary 1A]{Da1986} states that cell-like maps between surfaces are approximable by homeomorphisms. The surface $\tilde\Sigma$ is then homeomorphic to $\partial X$ and the map $\tilde f$ is thus approximable by homeomorphisms $f_{n}$. Choose any homeomorphism $h$ from $\partial X$ to $\tilde\Sigma$ and define the following maps
		\begin{align*}
			h_{n}:\partial X&\rightarrow \partial X\\
			x&\mapsto f_{n}^{-1}\circ h (x)
		\end{align*}	
		\begin{align*}
			\varphi_{n}:\partial X&\rightarrow \tilde U\\
			x&\mapsto \tilde \pi(x,1-\dfrac{1}{n})
		\end{align*}
		The map $\varphi_{n}$ send points of $\partial X$ close to $\tilde\Sigma$, and the map $h_{n}$ is a reparametrization of $\partial X$. As the limits of $\varphi_{n}$ and of $f_{n}$ are both $\tilde f$, the limit of the sequence $\varphi_{n}\circ h_{n}=\varphi_{n}\circ f_{n}^{-1}\circ h$ is the map $h$.
		
		From here, one can apply a Bing's approximation theorem \cite[Theorem 11.1]{Bi1959} to obtain the tameness of $\Sigma$, but we will provide a more direct proof.

		For any positive real number $t\in[n,n+1]$, we can interpolate the maps $\varphi_{n}$ and $\varphi_{n+1}$ and the maps $h_{n}$ and $h_{n+1}$ to obtain maps $\varphi_{t}$ and $h_{t}$ with the same properties. To define $\varphi_{t}$, we just replace $n$ by $t$ in the definition, and $h_{t}$ is defined using local connectivity of $Homeo(\partial X)$. We can then define the following homeomorphism.
		\begin{align*}
			\Psi:\partial X\times[0,1]&\rightarrow \tilde U\\
			(x,t)&\mapsto  \left\{\begin{array}{ll}
										\varphi_{\frac{1}{1-t}}\circ h_{\frac{1}{1-t}} & \mbox{if } t<1 \\
										h(x) & \mbox{if } t=1 
								  \end{array}\right.
		\end{align*}
		For $\Psi$ to be continuous at $t=1$, the paths between the maps $h_{n}$ and $h_{n+1}$ must however be chosen carefully so that their lengths go to zero with $n$.
		
		Finally, taking the quotient of $\tilde U$ by the covering map $p:\tilde\Sigma\rightarrow\Sigma$ yields the total space of a vector bundle over $\Sigma$. As this quotient is homeomorphic to $U$, this proves that $\Sigma$ has a tubular neighborhood. By proposition $\ref{locallyflat}$, $\Sigma$ is then tamely embedded.
	\end{proof}

	\subsection{Second case : $\Sigma$ is a circle}

	In this subsection, we suppose that the subset $\Sigma$ of $M$ is a circle $S^1$. The approach will be very similar to what we did for the first case. We will show that $\partial X$ is the total space of a circle bundle over $S^1$ (that is, a torus or a Klein bottle) and that the map $\pi$ from $\partial X\times[0,1]$ to $U$ can be reparametrized to a map whose restriction to $\partial X\times\{1\}$ is the standard projection of this bundle onto its basis.
	
	In this case, there are no cell-like map from $\partial X$ to some surface, but the general method still works. Most of the lemmas used for the first case will be adapted by hand, and previously used arguments will only be sketched.
	
	\begin{lemma}
		For any point $x\in\Sigma$, the fiber $f^{-1}(x)$ is connected and non-separating.
	\end{lemma}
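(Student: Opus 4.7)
The plan is to follow the template of Lemma \ref{fiber} (for connectedness) and of the non-separating half of Lemma \ref{fiberprop} (for non-separation), with the simplifying observation that no doubling construction is needed here: unlike a surface, a circle $\Sigma$ does not locally separate the 3-manifold $M$, so small neighborhoods of $\Sigma$ already have connected complement and one can work directly with $f$ and $\pi$ rather than via a double cover.

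First I would establish that every $x\in\Sigma$ admits a basis of closed neighborhoods $(C_k)_k$ in $M$ with $C_k\setminus\Sigma$ connected. Taking $C_k$ to be a shrinking basis of closed coordinate balls around $x$ in $M$, the intersection $C_k\cap\Sigma$ is a compact $1$-dimensional subset of a $3$-ball, and Alexander duality (together with the vanishing of $\check H^2$ on $1$-dimensional compacta) gives the desired connectedness. With this replacement for Theorem \ref{localjordan}, the connectedness of $f^{-1}(x)=\pi^{-1}(\{x\})\cap(\partial X\times\{1\})$ follows by replaying the proof of Lemma \ref{fiber} almost verbatim: assuming a splitting $\pi^{-1}(\{x\})=A_1\sqcup A_2$ into non-empty compact pieces, I set $Y=\{y\in\partial X\times[0,1]\mid d(y,A_1)=d(y,A_2)\}$ for some metric $d$, observe that each $\pi^{-1}(C_k\setminus\Sigma)\cong C_k\setminus\Sigma$ is connected and accumulates on both $A_i$, so must meet $Y$, and conclude by nested compactness that $\bigcap_k \pi^{-1}(C_k)\cap Y\subset \pi^{-1}(\{x\})\cap Y$ is simultaneously non-empty and empty, a contradiction.

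For the non-separating statement, once the fibers of $f$ are known to be connected, I apply Lemma \ref{fiberprop} directly to the continuous surjection $f:\partial X\to\Sigma$ between compact Hausdorff spaces ($\partial X$ is compact because $X$ is, and $\Sigma$ is compact as a closed subset of the closed manifold $M$). Since $\Sigma=S^1$, the singleton $\{x\}$ is non-separating in $\Sigma$, so Lemma \ref{fiberprop} yields that $f^{-1}(\{x\})$ is non-separating in $\partial X$.

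The only step requiring real care is the local complement statement, where one must invoke Alexander duality (or an equivalent codimension-two complement theorem) to compensate for the fact that $\Sigma$ is merely topologically embedded; beyond that, the argument is a literal transcription of the surface case with the covering construction stripped out, and I do not anticipate any further obstacle.
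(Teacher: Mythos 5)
Your argument is correct and essentially reproduces the paper's proof: replay the connectedness argument of Lemma~\ref{fiber}, using that small closed neighborhoods $C_k$ of $x$ in $M$ have $C_k\setminus\Sigma$ connected because $\Sigma$ has codimension two (so no double cover is needed), and then apply Lemma~\ref{fiberprop} with $B'=\{x\}$ to get the non-separating statement. One small caveat in your write-up: Alexander duality as usually stated concerns $S^3$, not a closed coordinate ball, so the justification that $C_k\setminus\Sigma$ is connected needs one extra step --- either pass from $\tilde H_0(S^3\setminus K)=\check H^2(K)=0$ to the ball, or invoke directly the dimension-theoretic fact that a closed subset of covering dimension $\leq n-2$ cannot disconnect a connected $n$-manifold (with boundary), which is precisely the one-line assertion the paper makes.
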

		
	\begin{proof}
		The proof of Lemma \ref{fiber} can be reused as it is. The important points being that the connected neighborhoods $(C_k)_{k}$ still exist and that the sets $C_k\setminus\Sigma$ are still connected, as removing a 1-dimensional topological submanifold from a 3-dimensional connected manifold cannot disconnect it.
		
		Lemma \ref{fiberprop} still shows that the fibers are also non-separating in $\partial X$.
	\end{proof}

	\begin{lemma}
		Let $I$ be an open interval of $\Sigma$. The set $f^{-1}(I)$ is homeomorphic to an open cylinder $S^1\times]0,1[$.
	\end{lemma}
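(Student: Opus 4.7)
The plan is to parallel the argument from the surface case showing that $\tilde f^{-1}(U_i)$ is a disk: I will show that $f^{-1}(I)$ is a connected, open, planar 2-manifold with exactly two ends, after which the classification of non-compact surfaces identifies it with the open cylinder $S^1\times(0,1)$.

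Openness of $f^{-1}(I)$ in $\partial X$ is immediate from continuity of $f$. Connectedness follows from Lemma \ref{fiberprop} applied to the connected set $I$, using that $f$ has connected fibers by the preceding lemma. For the end count, write $I=(a,b)$ and exhaust by compact sub-intervals $[a_n,b_n]\nearrow I$; by Lemma \ref{fiberprop} both $f^{-1}((a,a_n))$ and $f^{-1}((b_n,b))$ are connected, so
\[
f^{-1}(I)\setminus f^{-1}([a_n,b_n]) \;=\; f^{-1}((a,a_n))\sqcup f^{-1}((b_n,b))
\]
has exactly two connected components, yielding exactly two ends of $f^{-1}(I)$.

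The main obstacle is to rule out higher genus and one-sided curves in $f^{-1}(I)$, and I adapt the intersection-number argument from the surface case, now exploiting that $I$ (in place of $U_i$) is contractible. Suppose, for contradiction, that there exist smoothly embedded simple closed curves $\gamma_1,\gamma_2\subset f^{-1}(I)$ with mod-$2$ intersection number $1$, where the smooth structure comes from the collar $\pi(f^{-1}(I)\times[0,\varepsilon])$ as before. Push $\gamma_1$ off to $\gamma'=\pi(\gamma_1\times\{\varepsilon/2\})$, which lies in $M\setminus\Sigma$. Cap the cylinder $\pi(\gamma_2\times[0,1])$ by a null-homotopy of $f\circ\gamma_2\colon S^1\to I$ inside the contractible arc $I$; this produces a continuous singular disk $D\colon D^2\to M$ with $\partial D=\gamma_2$. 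Since the cap lies in $I\subset\Sigma$ and $\gamma'$ avoids $\Sigma$, the intersection $D\cdot\gamma'$ is concentrated at level $\varepsilon/2$ of the cylinder and reduces mod $2$ to $|\gamma_1\cap\gamma_2|\equiv 1$. The symmetric construction on $\gamma_1$ produces a disk $D'$ with $\partial D'=\gamma'$ whose image sits in $\pi(\partial X\times[\varepsilon/2,1])\cup\Sigma$ and therefore avoids the level-$0$ curve $\gamma_2$, so $\gamma_2\cdot D'\equiv 0$. After smoothing $D$ and $D'$ in the smoothable 3-manifold $M$ and making them transverse, $D\cap D'$ is a compact $1$-manifold whose mod-$2$ boundary equals $(D\cdot\gamma')+(\gamma_2\cdot D')\equiv 1+0\equiv 1\pmod 2$, contradicting the fact that the boundary of a compact $1$-manifold has even mod-$2$ cardinality.

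Hence $f^{-1}(I)$ is planar; being a connected open $2$-manifold of genus zero with exactly two ends, it is homeomorphic to $S^1\times(0,1)$ by the classification of non-compact surfaces.
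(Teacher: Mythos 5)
Your proof is correct and follows essentially the same route as the paper: connectedness and the two-end count come from Lemma \ref{fiberprop}, and the non-cylinder possibilities are excluded by the same collar/intersection-number argument that the paper imports from Lemma \ref{celllike} (your two-disk parity count is just an explicit version of the paper's contradiction that $\gamma'$ meets the capped surface once yet can be pushed into $I$ and contracted away from its boundary). The only cosmetic difference is that you conclude via planarity and the classification of non-compact surfaces instead of writing $f^{-1}(I)$ as a closed surface minus two points.
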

	
	\begin{proof}
		We know that $\tilde f^{-1}(I)$ is a connected surface thanks to Lemma \ref{fiberprop}. This surface has two ends. Indeed, $I$ is an increasing union of compact sets $I_n$ separating $I$ into two connected components $C_n$ and $C'_n$ with $C_{n+1}\subset C_n$ and $C'_{n+1}\subset C'_n$. So, by Lemma \ref{fiberprop}, $f^{-1}(I)$ is also the increasing union of the compact sets $f^{-1}(I_n)$ separating $f^{-1}(I)$ into two connected components $f^{-1}(C_n)$ and $f^{-1}(C'_n)$ with $f^{-1}(C_{n+1})\subset f^{-1}(C_n)$ and $f^{-1}(C'_{n+1})\subset f^{-1}(C'_n)$.
		
		Suppose that $f^{-1}(I)$ is not an open cylinder, it is then homeomorphic to a non-simply connected compact surface with two points removed. We can then find two smooth curves $\gamma_1$ and $\gamma_2$ in $\tilde f^{-1}(I)$ with a non-zero modulo 2 intersection number. The argument used in Lemma \ref{celllike} then work in the same way.
	\end{proof}

	\begin{lemma}
		The surface $\partial X$ is homeomorphic to the total space of a circle bundle over $\Sigma$.
	\end{lemma}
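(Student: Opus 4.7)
The plan is to compute the Euler characteristic of $\partial X$ from a cover by cylinders supplied by the preceding lemma, and thereby identify $\partial X$ as either a torus or a Klein bottle; both are total spaces of circle bundles over $\Sigma = S^1$.

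Concretely, I would cover $\Sigma$ by two open arcs $I_1, I_2$ whose union is $\Sigma$ and whose intersection $I_1 \cap I_2$ is the disjoint union of two open arcs. By the preceding lemma, each of $f^{-1}(I_1)$, $f^{-1}(I_2)$ and the two components of $f^{-1}(I_1 \cap I_2)$ is homeomorphic to $S^1 \times (0,1)$ and so has Euler characteristic zero. Additivity of the Euler characteristic for this open cover then gives
\[\chi(\partial X) = \chi(f^{-1}(I_1)) + \chi(f^{-1}(I_2)) - \chi(f^{-1}(I_1 \cap I_2)) = 0,\]
so as a closed surface $\partial X$ must be a torus or a Klein bottle. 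Either way it is the total space of an $S^1$-bundle over $\Sigma$, which is what the statement asserts.

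The main obstacle is not this Euler-characteristic bookkeeping but the strengthening needed for what follows in this subsection: namely, that the bundle structure can be arranged so that $f$ itself is the bundle projection. To that end one would choose trivializations $f^{-1}(I_j) \cong S^1 \times I_j$ in which $f$ corresponds to the second projection, then glue them along the two overlap cylinders. The construction of such trivializations ultimately reduces to showing that every fiber $f^{-1}(x)$ is an essential separating simple closed curve in the ambient cylinder $f^{-1}(I_j)$: applying the preceding lemma to the two open components of $I_j \setminus \{x\}$ shows that $f^{-1}(x)$ separates $f^{-1}(I_j)$ into two open subcylinders with itself in their common frontier, and a straightening argument for such separating sets in $S^1 \times (0,1)$ -- playing a role analogous to Moore's theorem and Bing's approximation in the surface case, but adapted to fibers that are circles rather than cell-like sets -- then yields the desired fiber-preserving identification.
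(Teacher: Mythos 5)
Your argument is correct, but it takes a genuinely different route from the paper. The paper also covers $\Sigma$ by two arcs $I_1,I_2$ meeting in two arcs $I_3,I_4$, but instead of Euler characteristics it argues directly: it picks non-null-homotopic circles $S_3\subset f^{-1}(I_3)$ and $S_4\subset f^{-1}(I_4)$, observes that these are still essential in the open cylinder $f^{-1}(I_1)$, so the compact region between them in $f^{-1}(I_1)$ is a closed annulus, and likewise in $f^{-1}(I_2)$; hence $\partial X$ is two closed annuli glued along their boundary circles, which is exactly a circle bundle over $S^1$. Your Mayer--Vietoris computation $\chi(\partial X)=0$ plus the classification of closed surfaces is shorter and perfectly valid, with one small point to make explicit: to conclude ``torus or Klein bottle'' you need $\partial X$ connected, which follows from Lemma \ref{fiberprop} since the fibers of $f$ are connected and $\Sigma$ is connected. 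What the paper's construction buys is a concrete annulus decomposition roughly aligned with $f$, but note that your worry about upgrading the statement so that $f$ itself becomes the bundle projection is not actually needed afterwards: in the subsequent proof the paper takes an arbitrary circle-bundle map $h:\partial X\to\Sigma$ and works with essential circles in preimages of regular values of smooth approximations $f_n$ of $f$ (supplied by the preceding cylinder lemma), rather than with a fiber-preserving identification of $f$; so the abstract statement you prove is exactly what is used, and the ``straightening'' step you sketch, while plausible, is neither carried out in the paper nor required.
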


	\begin{proof}
		Write $\Sigma$ as the union of two open intervals $I_1$ and $I_2$. These two intervals intersect in two other intervals $I_{3}$ and $I_{4}$. Let $S_3$ and $S_4$ be two non-null-homotopic circles in the cylinders $f^{-1}(I_3)$ and $f^{-1}(I_4)$. The circles $S_3$ and $S_4$ are also non-null-homotopic in $f^{-1}(I_1)$, so the compact set between $S_3$ and $S_4$ in $f^{-1}(I_1)$ is then homeomorphic to a closed cylinder $S^1\times[0,1]$. The same is true for $f^{-1}(I_2)$, so $\partial X$ is the union of two closed cylinder glued along their boundaries. So $\partial X$ is homeomorphic to the total space of a circle bundle over $\Sigma$.
	\end{proof}

	We can now conclude the proof of Proposition \ref{tamenesscriterion}.
	
	\begin{proof}[Proof of Proposition \ref{tamenesscriterion} if $\Sigma$ is a circle]
		Let $h$ be a map from $\partial X$ to $\Sigma$ so that the triple $(\partial X, \Sigma, h)$ is a circle bundle. For $n\in\mathbb{Z}$, let $f_n$ be a smooth $\frac{1}{n}$-approximation of $f$. By Sard's theorem, there is an homeomorphism $j$ between $\faktor{\mathbb{R}}{\mathbb{Z}}$ and $\Sigma$ such that each $q\in\faktor{\mathbb{Q}}{\mathbb{Z}}\subset j^{-1}(\Sigma)$ is a regular value for every $f_n$. Each preimage $f_n^{-1}\Big(j\big(\dfrac{i}{2^n}\big)\Big)$ for $0\leqslant i < 2^n$ then contains a non-null-homotopic smooth circle $S_{i,n}$. We then define an homeomorphism $h_n$ from $\partial X$ to itself that sends each circle $h^{-1}\Big(j\big(\dfrac{i}{2^n}\big)\Big)$ on $S_{i,n}$. And we also define the following map.
	\begin{align*}
		\varphi_n:\partial X&\rightarrow M\\
		x&\mapsto \pi(x,1-\frac{1}{n})
	\end{align*}
	On a circle $h^{-1}\Big(j\big(\dfrac{i}{2^n}\big)\Big)$, the limit of the sequence $(\varphi_n\circ h_n)_{n\in\mathbb{N}}$ is $j\big(\dfrac{i}{2^n}\big)$. So this sequence converges to the map $h$.
	
	For any positive real number $t$, we can interpolate the maps $\varphi_n$ and $h_n$ to obtain maps $\varphi_t$ and $h_t$ as in section \ref{sectionsurface}. We can then define the following map.
	\begin{align*}
		\Psi:\partial X\times[0,1]&\rightarrow M\\
		(x,t)&\mapsto  \left\{\begin{array}{ll}
									\varphi_t\circ h_t(x)& \mbox{if } t<1 \\
									h(x) & \mbox{if } t=1 
								\end{array}\right.
	\end{align*}
	By identifying the points having the same image by $\Psi$, we obtain the total space of a disk bundle over $\Sigma$ that maps homeomorphically to a neighborhood of $\Sigma$ and whose core is mapped on $\Sigma$. By proposition $\ref{locallyflat}$, $\Sigma$ is tamely embedded.
	\end{proof}

	Now that we have proved Proposition \ref{tamenesscriterion} in every case, we will use this tameness criterion to prove Proposition \ref{lipschitzproposition}.

	\section{Finite (1+$\varepsilon$)-bilipschitz actions}\label{partlipschitz}
	
	In this section, we show that we can use the tameness criterion developed in the previous section on finite order (1+$\varepsilon$)-bilipschitz mappings. More precisely, we show the following.
	
	\lipschitzproposition*
	
	Remark that this proposition works for any dimension and any finite group.
	
	To obtain Proposition \ref{lipschitzproposition}, we will define a continuous flow $\varphi$ near the fixed set $M^G$ such that \[\lim_{t\rightarrow\infty}\varphi_t(x)\in M^G\]
	for any point $x$ close enough to $M^G$, and such that this convergence is uniform in $x$. We will show that this flow allows us to define a compactification of the complement of $M^G$ which extends continuously (but not homeomorphically) to $M^G$.

	This flow $\varphi$ will be constructed as the flow of a Lipschitz vector field $\pmb{v}$. As presented in \cite{CB2008}, Lipschitz continuity on the parameters of an ODE is enough to obtain a continuous dependency on the initial conditions and to produce a continuous flow.
	
	\subsection{A vector field near the fixed set}\label{setting}
	
	Let $\varepsilon=\dfrac{1}{4000}$ and let $G$ and $M$ be as in Proposition \ref{lipschitzproposition}. If we were given a point $x$ near the fixed set $M^G$, then a natural place where we could look for a fixed point would be near the center of mass $B(x)$ of the orbit of $x$. We will build a vector field $\pmb{v}$ pointing towards this center of mass. In what follows, we will freely use some classic notions and facts of Riemannian geometry, such as the injectivity radius and the center of mass. We refer to \cite{Be2003} to learn more about these topics.
	
	Let $r$ be the convexity radius of $M$. The center of mass of a finite set $S$ of points in a ball of radius $r$ is the unique minimum of the function
	\[z\mapsto\sum_{s \in S}d(z,s)^2 \]
	Equivalently, it is the unique zero of the vector field
	\[z\mapsto\sum_{s\in S}\exp_z^{-1}(s).\]
	Note that, for any point $x$ at a distance $\dfrac{r}{1+\varepsilon}$ from a fixed point, the orbit $G_x$ of $x$ is included in a ball of radius $r$ around the latter. This means that $G_x$ has a well-defined center of mass for any such $x$.

	Let us proceed with some definitions.
	
	\begin{itemize}
		\item If $d(x,M^G)<\dfrac{r}{1+\varepsilon}$, let $B(x)$ be the center of mass of the orbit of $x$.
	\end{itemize}
	As explained in Section \ref{euclideansection}, $B$ has a Lipschitz dependency on $x$. This means that $\exp_x^{-1}\big(B(x)\big)$ is Lipschitz in $x$.
	\begin{itemize}
		\item Let $\pmb{v}$ be a Lipschitz vector field on $M$ with $\pmb{v}(x)=\exp_x^{-1}\big(B(x)\big)$ wherever $d(x,M^G)<\dfrac{r}{1+\varepsilon}$.
		\item Let $\varphi$ be the flow of $\pmb{v}$.
	\end{itemize}

	As explained in \cite{CB2008}, the flow $\varphi$ is well-defined and acts by homeomorphisms.
	
	To prove Proposition \ref{lipschitzproposition}, we will make use of the following lemma.
	
	\begin{lemma}\label{control}
		There are positive constants $\tau>0$ and $k<1$ and an open neighborhood $V$ of $M^G$ such that we have the inequality
		\[\|\pmb{v}\big(\varphi_\tau(x)\big)\|\leq k\|\pmb{v}(x)\|\]
		for all $x$ in $V$.
	\end{lemma}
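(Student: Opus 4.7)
The plan is to estimate the contraction by comparing $\varphi$ to its Euclidean, isometric model. In $\mathbb{R}^n$ with a linear isometric $G$-action fixing a linear subspace $F$, the centre of mass of an orbit $G\cdot y$ is the orthogonal projection $Py$ onto $F$, so $\pmb{v}(y)=Py-y$ and the ODE $\dot y = Py-y$ has solution $y(t)-Py(t)=e^{-t}(y(0)-Py(0))$, giving $\|\pmb{v}(\varphi_t(y))\|=e^{-t}\|\pmb{v}(y)\|$. This clean exponential contraction is the heart of the statement; the work is to see how much is lost by passing from the flat isometric picture to the Riemannian $(1+\varepsilon)$-bilipschitz one. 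I would aim to show that, for any $x\in V$, a comparable estimate holds up to an error of order $\varepsilon$, and then take $\varepsilon=\tfrac{1}{4000}$ small enough to keep the resulting coefficient strictly below $1$.

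To transfer the model estimate, I would fix $x\in V$, pick a point $p\in M^G$ closest to $x$, and pull everything back by the normal chart $\exp_p:T_pM\to M$. On a ball of radius small compared to the convexity radius and the curvature bound, the pulled-back metric is $\delta_{ij}+O(d(\cdot,0)^2)$, and each $g\in G$, fixing $p$ and being $(1+\varepsilon)$-bilipschitz, is within $O(\varepsilon)\cdot d(\cdot,0)$ of its derivative $D_pg$, which is itself a $(1+\varepsilon)$-bilipschitz linear map fixing $T_pM^G$. Combining this with the Lipschitz control on the centre-of-mass map recorded in Section~\ref{euclideansection} and the explicit formula for the Hessian of $z\mapsto\sum_{g}d(z,g\cdot y)^2$ (which near the fixed set is close to $2|G|\cdot\mathrm{Id}$), I would prove that in the chart
\[\tilde{\pmb{v}}(y)=(Py-y)+E(y),\qquad \|E(y)\|\leq\bigl(C_1\varepsilon+C_2\,\mathrm{diam}(V)\bigr)\,\|y-Py\|,\]
where $P$ is the orthogonal projection onto $T_pM^G$ and $C_1,C_2$ are absolute constants depending only on dimension.

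With this comparison in hand I would fix $\tau$ of order one (for instance $\tau=1$), shrink $V$ so that the trajectory $\{\varphi_t(x)\}_{t\in[0,\tau]}$ stays inside the normal chart at $p$ and the diameter term is negligible, and then apply a Grönwall argument to the difference between the Riemannian flow of $\pmb{v}$ and the linear flow of $Py-y$. This yields
\[\|\pmb{v}(\varphi_\tau(x))\|\leq\bigl(e^{-\tau}+C_1\varepsilon+o_V(1)\bigr)\|\pmb{v}(x)\|,\]
with $o_V(1)\to 0$ as $V$ shrinks. The specific value $\varepsilon=\tfrac{1}{4000}$ is then used to make $C_1\varepsilon$ strictly less than, say, $1-e^{-\tau}$, producing an admissible $k<1$.

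The main obstacle will be extracting the constant $C_1$ in usable form. Since $B$ is only implicitly defined as the minimiser of $z\mapsto\sum_{g}d(z,g\cdot x)^2$, every perturbation estimate must be pushed through the Hessian of this energy, and both the Riemannian metric and the group action are being perturbed simultaneously by terms of size $\varepsilon$ and by curvature terms. Keeping the bookkeeping tight enough that the numerical bound $\varepsilon=\tfrac{1}{4000}$ actually suffices, uniformly in the point $p$ and independently of $|G|$, is where the real effort lies.
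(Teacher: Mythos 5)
There is a genuine gap at the very first transfer step of your plan. Your argument rests on replacing each $g\in G$ by ``its derivative $D_pg$, a $(1+\varepsilon)$-bilipschitz linear map fixing $T_pM^G$'', with error $O(\varepsilon)\,d(\cdot,0)$. None of this is available in the setting of Lemma \ref{control}: the elements of $G$ are merely $(1+\varepsilon)$-bilipschitz homeomorphisms, so $D_pg$ need not exist at a prescribed fixed point $p$, and a $(1+\varepsilon)$-bilipschitz map fixing the origin need not be within $O(\varepsilon)\,d(\cdot,0)$ of any single linear map (John-type stability gives an approximating isometry on each ball with additive error comparable to $\varepsilon$ times the radius, but that isometry can change from scale to scale and has no reason to fix anything). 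Worse, $T_pM^G$ does not make sense here: at this stage $M^G$ is only a closed fixed-point set, a priori possibly wildly embedded --- its tameness/smoothness is precisely the conclusion the paper is working towards, so modelling the action on a linear isometric action fixing a linear subspace $F$ is circular. Consequently the model identity $\pmb{v}(y)=Py-y$ and the clean $e^{-\tau}$ contraction are not objects you can legitimately compare to. There is also a quantitative problem even in the flat case: the only control on how much $G$ moves the barycenter comes from the bilipschitz bound on sums of squared distances, and extracting a displacement bound requires a square root, giving an error of order $\sqrt{\varepsilon}$ (this is the $\sqrt{2\varepsilon+\varepsilon^2}$, i.e.\ $R=\frac{1}{40}$, of Lemma \ref{flatbarycenterlemma}), not $C_1\varepsilon$; an estimate of the form $e^{-\tau}+C_1\varepsilon$ is therefore too optimistic, and indeed the paper only reaches $k'=\frac{999}{1000}$ with $\tau=\frac{1}{5}$.

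The paper avoids linearization entirely. In a flat chart it writes $B(x)=\frac{1}{|G|}\sum_{g}gx$ and $\pmb{v}(x)=B(x)-x$, proves Lemma \ref{flatbarycenterlemma} (each $g_0$ moves $B(x)$ by at most $\frac{1}{40}\,d(x,B(x))$) using only the $(1+\varepsilon)$-Lipschitz bound and the variance identity for centers of mass, and then runs three explicit Euclidean steps: the flow moves $x$ by at most $\frac{\delta}{3}$ up to time $\tau=\frac{1}{5}$; hence $d(\varphi_\tau(x),B(x))\le\frac{19}{20}\delta$; hence, since $B(\varphi_\tau(x))$ is constrained to a convex region determined by the bilipschitz inequalities together with Lemma \ref{flatbarycenterlemma}, one gets $\|\pmb{v}(\varphi_\tau(x))\|\le\frac{999}{1000}\|\pmb{v}(x)\|$. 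The curved-to-flat comparison with the metric of $T_pM$, with errors $O(\delta^2)$, is the one part of your plan that matches the paper (Section \ref{euclideansection}). To repair your proof you would need to delete every appeal to $D_pg$, to $T_pM^G$ and to the orthogonal projection $P$, and argue purely metrically about the barycenter --- at which point you are essentially reconstructing the paper's Steps 1--3.
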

	
	We will also need the following result, asserting that, as long as $x$ is close to $M^G$, then the point $B(x)$ is almost fixed by $G$.
	
	\begin{lemma}\label{barycenterlemma}
		There is a constant $R>0$ depending only on $M$ and an open neighborhood $V$ of $M^G$ such that, for $g_0\in G$, we have
		\[d\big(B(x),g_0B(x)\big)\leqslant R~d\big(x,B(x)\big).\]
		for all $x$ in $V$.
	\end{lemma}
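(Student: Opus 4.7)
The plan is to use the variational characterization of $B(x)$ as the unique minimizer of
\[F_x(z) := \sum_{g \in G} d(z, gx)^2\]
in the convexity ball, together with the fact that $G$ is cyclic (and hence abelian). Because $G$ is abelian, left-multiplication by $g_0$ is a bijection of $G$, so $\{gx\}_{g\in G} = \{g_0 g x\}_{g \in G}$ as subsets of $M$. This reindexing is the crucial algebraic input: if $g_0$ were an isometry, it combined with the equivariance of $\exp$ would force $g_0 B(x) = B(x)$, and the bilipschitz hypothesis is precisely what quantifies the failure of this equality.

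Using the $(1+\varepsilon)$-bilipschitz property of $g_0$ together with the reindexing, I first obtain the upper bound
\[F_x(g_0 B(x)) = \sum_g d(g_0 B(x), g_0 g x)^2 \leq (1+\varepsilon)^2\, F_x(B(x)).\]
On the other hand, strong convexity of $F_x$ near its minimum (valid on a convex neighborhood of $M^G$ because the Hessian of $F_x$ at $B(x)$ is comparable to $2|G|\cdot I$) provides a constant $c>0$ depending only on the local geometry of $M$ such that $F_x(z) - F_x(B(x)) \geq c\,|G|\,d(z,B(x))^2$ on this neighborhood. Applying this at $z = g_0 B(x)$ and combining with the above yields
\[d(g_0 B(x), B(x))^2 \leq \frac{(1+\varepsilon)^2 - 1}{c\,|G|}\,F_x(B(x)).\]

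It remains to bound $F_x(B(x))$ by a constant times $|G|\,d(x,B(x))^2$. Using $F_x(B(x)) \leq F_x(x) = \sum_g d(x,gx)^2$ together with the triangle inequality through $B(x)$ and $gB(x)$ gives
\[d(x, gx) \leq d(x, B(x)) + d(B(x), gB(x)) + (1+\varepsilon)\,d(B(x), x),\]
so the bound on $d(x,gx)$ ends up involving the very quantity $d(B(x), gB(x))$ that the lemma controls. This circularity is the main obstacle of the proof. I would close the loop by a bootstrap on a compact neighborhood $\overline{V}$ of $M^G$: by continuity of the action the ratio $d(B(x), gB(x))/d(x,B(x))$ admits an a priori finite supremum $R_0$, and the previous computation then upgrades it to $R \leq \lambda(2+\varepsilon+R_0)$ with $\lambda = \sqrt{((1+\varepsilon)^2 - 1)/c}$. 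Since $\varepsilon = 1/4000$ makes $\lambda$ very small, iterating the recursion converges to a fixed point $R>0$ depending only on $M$, as required.
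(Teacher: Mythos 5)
Your overall strategy is essentially the paper's: compare $\sum_{g}d\big(g_0B(x),gx\big)^2$ with $\sum_{g}d\big(B(x),gx\big)^2$ using the bilipschitz property and the reindexing $g\mapsto g_0g$ (which, incidentally, is a bijection in any group --- no abelianness is needed, and Proposition \ref{lipschitzproposition} allows an arbitrary finite $G$), then convert the resulting gain of order $\sqrt{(1+\varepsilon)^2-1}$ into a bound on $d\big(g_0B(x),B(x)\big)$. In the flat model the ``strong convexity'' you invoke is in fact the exact identity $\frac{1}{\vert G\vert}\sum_{g}d(y,gx)^2=d\big(y,B(x)\big)^2+\frac{1}{\vert G\vert}\sum_{g}d\big(B(x),gx\big)^2$, and the curvature correction needed for general $M$ is supplied in the paper by the comparison with the Euclidean metric of $T_pM$ in Section \ref{euclideansection}; your Hessian-comparison version of that step is acceptable.

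The genuine gap is in your bootstrap. You assert that ``by continuity of the action'' the ratio $d\big(B(x),gB(x)\big)/d\big(x,B(x)\big)$ has a finite supremum $R_0$ on a compact neighborhood of $M^G$. Continuity gives no such thing: both numerator and denominator vanish on $M^G$, so this supremum could a priori be infinite --- indeed, boundedness of precisely this ratio near $M^G$ is the content of the lemma, so assuming a finite uniform bound to seed the iteration begs the question. The repair is to close the self-referential estimate pointwise rather than uniformly: fix $x$ and set $\rho(x)=\max_{g\in G}d\big(B(x),gB(x)\big)$, which is trivially finite because $G$ is finite. Your computation gives $d\big(g_0B(x),B(x)\big)\leq\lambda\max_{g}d(x,gx)\leq\lambda\bigl((2+\varepsilon)\,d\big(x,B(x)\big)+\rho(x)\bigr)$ for every $g_0$, hence $\rho(x)\leq\lambda(2+\varepsilon)\,d\big(x,B(x)\big)+\lambda\rho(x)$, and since $\lambda<1$ this solves directly to $\rho(x)\leq\frac{\lambda(2+\varepsilon)}{1-\lambda}\,d\big(x,B(x)\big)$ with a constant independent of $x$ --- no iteration and no a priori uniform bound needed. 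This is exactly how the paper's proof of Lemma \ref{flatbarycenterlemma} handles the same self-reference, with $\max_{g}d\big(B(x),gx\big)$ playing the role of your $\rho(x)$. Note also that your cruder triangle inequality (passing through $d(x,gx)$ rather than the paper's sharper bound on $\max_{g}d\big(B(x),gx\big)$) yields roughly $R\approx 1/22$ instead of $1/40$; this suffices for the statement of Lemma \ref{barycenterlemma} as written, but the specific value $R=1/40$ is reused in Step 3 of the proof of Lemma \ref{flatcontrol}, so the loss of constant is not entirely harmless.
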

	
	Now, we show how Lemma \ref{control} and Lemma \ref{barycenterlemma} can be used to prove Proposition \ref{lipschitzproposition}.
	
	\begin{proof}[Proof of Proposition \ref{lipschitzproposition}]
		We begin by defining a map $l$ that measures the length of the flow line from a point $x$ to the fixed set :
		\[l(x)=\int_0^\infty \|\pmb{v}\big(\varphi_{t}(x)\big)\|\text{d}t \]
		
		We claim that this quantity is finite and depends continuously on $x$. Indeed, from Lemma \ref{control}, when $x$ is sufficiently close to the fixed set, we have the two following inequalities :
		\begin{align*}
			\|\pmb{v}\big(\varphi_{t+\tau}(x)\big)\|&\leq k~\|\pmb{v}\big(\varphi_t(x)\big)\|\\
			\|\pmb{v}\big(\varphi_{t}(x)\big)\|&\leq 1
		\end{align*}
		
		From which we obtain :
		\[\|\pmb{v}\big(\varphi_{t}(x)\big)\|\leq \|\pmb{v}(x)\|~k^{-\frac{t}{\tau}-1}\leq k^{-\frac{t}{\tau}-1}\tag{$\star$}\label{uniform}\]
		
		We see that the he map $t\mapsto\|\pmb{v}\big(\varphi_{t}(x)\big)\|$ is bounded by an integrable map independent of $x$. Thanks to the dominated convergence theorem, this uniform integrable bound shows that $l(x)$ is finite and that the map $l$ inherits the continuity of its integrand.
		
		As $\|\pmb{v}(x)\|$ converges to $0$ as $x$ approaches $M^G$, there is a neighborhood of $M^G$ such that every flow line starting in it will stay in $V$ and then converge. We still need to show that such limits are fixed points.
		
		Using the inequality of Lemma \ref{control}, we see that the vector field $\pmb{v}$ vanishes at the limits of these flow lines. From the expression of $\pmb{v}$, these limits are thus points fixed by the map $B$. Using Lemma \ref{barycenterlemma}, we see that a point fixed by $B$ is also fixed by the action of $G$. Indeed, if a point $x_0$ is fixed by $B$ and if $g_0\in G$, we have
		\begin{alignat*}{4}
			&d\big(x_0,g_0x_0\big)&&\leqslant d\big(x_0,B(x_0)\big)&&+d\big(B(x_0),g_0B(x_0)\big)&&+d\big(g_0B(x_0),g_0x_0\big)\\
			&~&&\leqslant 0&&+R\times 0&&+(1+\varepsilon)\times 0
		\end{alignat*}
		
		To show that the complement $M\setminus M^G$ of the fixed set is homeomorphic to the interior of a compact manifold with boundary, we will define a codimension 1 topological submanifold $Z$ intersecting every flow line only once. This will show that the end of $M\setminus M^G$ is homeomorphic to $Z\times [0,1[$, proving that $M\setminus M^G$ is homeomorphic to the interior of a compact manifold with boundary.
		
		Choose $b>0$ so that every point at a distance at most $b$ of $M^G$ is in $V$ and define the subset $Z$ of $M$ as follows.
		\[Z=l^{-1}(b)\]
		
		When $t$ increases, the length $l\big(\varphi_t(x)\big)$ decreases, the set $Z$ thus intersects each flow line only once.
		
		This set is a closed topological submanifold. Indeed, as the map $l$ is continuous ans as $M^G$ is compact, the set $Z$ is also compact. Moreover, from a Lipschitz version of the flow-box theorem \cite[Theorem 4]{CB2008}, we see that every point of $Z$ has a neighborhood homeomorphic to $\mathbb{R}^{n-1}$.
		
		The map $\varphi:Z\times[0,1[\rightarrow M:(x,t)\mapsto\varphi_\frac{t}{1-t}(x)$ is an homeomorphism onto its image which describes the topology of the end of $M\setminus M^G$ and which can be compactified by adding a boundary $Z\times\{1\}$. This is done by taking the limit of the flow.
		\begin{align*}
			Z\times\{1\} & \longrightarrow ~M^G\\
			(x,1) & \longmapsto \lim_{t\rightarrow\infty}\varphi_t(x)
		\end{align*}
			
		The uniform bound (\ref{uniform}) also shows that the limit $\underset{t\rightarrow\infty}{\lim}\varphi_t$ is uniform, making this map continuous.
		
		This proves that the inclusion of $M\setminus M^G$ in $M$ extends to a  continuous map from $X$ to $M$. 
	\end{proof}
	
	Our goal is now to show Lemma \ref{control} and Lemma \ref{barycenterlemma}.
		
	\subsection{Proof of Lemma \ref{control} and Lemma \ref{barycenterlemma} in a flat setting}\label{sectionflat}
	
	To make the computations easier, we will begin by working in a flat geometry. Namely, for a fixed point $p\in M^G$, we will consider that $p$ has a sufficiently large neighborhood isometric to an open subset of $\mathbb{R}^n$. As we will only work locally, we will be able to reduce to this case. We will discuss this question in section \ref{euclideansection}.
	
	We will show the following.
	
	\begin{lemma}\label{flatcontrol}
		If $M$ is flat around a point $p$, there are positive constants $\tau>0$ and $k'<1$ and an open neighborhood $V_p$ of $p$ such that we have the inequality
		\[\|\pmb{v}\big(\varphi_\tau(x)\big)\|\leq k'\|\pmb{v}(x)\|\]
		for all $x$ in $V_p$.
	\end{lemma}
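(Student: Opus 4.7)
The plan is to carry out all estimates in the flat chart, where the center of mass is the Euclidean average $B(x) = \frac{1}{|G|}\sum_{g\in G}g(x)$, so that $\pmb{v}(x) = B(x) - x$ and the flow satisfies the Lipschitz ODE $\dot\gamma = B(\gamma) - \gamma$. The goal is to show that this flow contracts $\|\pmb{v}\|$ by a definite factor $k'<1$ after some fixed time $\tau$.

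A naive attempt using only the $(1+\varepsilon)$-Lipschitz bound on $B$ fails: writing $\pmb{v}(y) = (B(y)-B(x)) + \pmb{v}(x) - (y-x)$ for $y = \varphi_\tau(x)$, and using $y-x \approx (1-e^{-\tau})\pmb{v}(x)$ (the exact expression if $B$ were constantly equal to $B(x)$), one obtains only $\|\pmb{v}(y)\| \leq \bigl(1 + \varepsilon(1-e^{-\tau})\bigr)\|\pmb{v}(x)\|$, which is slightly expanding. One must therefore exploit that $B$ is not just $(1+\varepsilon)$-Lipschitz but close to a genuine orthogonal projection onto $M^G$.

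The main step is thus a local linearization: for each $g\in G$ and each $q\in M^G \cap V_p$, I would produce an orthogonal linear map $A_g^q$, of order dividing that of $g$ and fixing $T_q M^G$ pointwise, such that
\[
\bigl\|g(q+u) - q - A_g^q u\bigr\| \leq C\varepsilon \|u\|
\]
for $u$ in a uniform neighborhood of $0$. One obtains $A_g^q$ by a blow-up: the rescalings $g_\lambda(u) := \lambda^{-1}(g(q+\lambda u) - q)$ are uniformly $(1+\varepsilon)$-bilipschitz, fix $0$, and inherit the finite order of $g$ (a direct computation shows $g_\lambda^n = \mathrm{id}$); any Arzel\`a-Ascoli subsequential limit as $\lambda\to 0$ is $1$-homogeneous, $(1+\varepsilon)$-bilipschitz and of finite order, which for $\varepsilon$ small enough forces it to lie within $O(\varepsilon)$ of a genuine orthogonal matrix. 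Averaging these linearizations over $G$ gives $B(q+u) = q + P_q u + O(\varepsilon\|u\|)$ where $P_q$ is the orthogonal projection onto $T_q M^G$, and therefore $\pmb{v}(q+u) = -(I-P_q)u + O(\varepsilon\|u\|)$; up to an $O(\varepsilon)\,d(\cdot, M^G)$ error, $\pmb{v}$ agrees with the linear contraction field whose flow decays at rate $e^{-t}$.

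Plugging this approximation into the ODE and running Gronwall for time $\tau$ then yields
\[
\|\pmb{v}(\varphi_\tau(x))\| \leq \bigl(e^{-\tau} + C'\varepsilon\bigr)\,\|\pmb{v}(x)\|,
\]
which for, say, $\tau=1$ and $\varepsilon = 1/4000$ is strictly less than $1$, giving the required $k'$. The main obstacle is precisely the linearization step: converting the near-isometry coming from the $(1+\varepsilon)$-bilipschitz and finite-order hypotheses into a pointwise $C^0$ approximation of $g$ by an orthogonal linear map, with error linear in $\varepsilon$ and uniform in $q \in M^G \cap V_p$. It is here that both the flat hypothesis and the smallness of $\varepsilon$ are used most essentially; once the linearization is in hand, everything else reduces to elementary Euclidean geometry and Gronwall's lemma.
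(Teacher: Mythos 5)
Your route (blow-up linearization of each $g$ at a fixed point, averaging to an approximate projection, then Gronwall) is genuinely different from the paper's, but as written it has a gap that I do not see how to close without circularity. Even granting the linearization, your approximation reads $\pmb{v}(q+u)=-(I-P_q)u+O(\varepsilon\|u\|)$ with $u=x-q$, so Gronwall gives at best $\|\pmb{v}(\varphi_\tau(x))\|\leq e^{-\tau}\|\pmb{v}(x)\|+C\varepsilon\,\|x-q\|$. To turn this additive error into the multiplicative bound $(e^{-\tau}+C'\varepsilon)\|\pmb{v}(x)\|$ you need $d(x,M^G)\leq C\|\pmb{v}(x)\|$, and that comparison is not available at this stage: a priori an almost-fixed point (small $\|\pmb{v}\|$) need not lie near an actual fixed point, and indeed this comparison is essentially a consequence of Lemma \ref{flatcontrol} itself (via convergence of the flow), so invoking it here is circular. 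Relatedly, "the orthogonal projection onto $T_qM^G$" is not defined a priori: $M^G$ is only a topological submanifold, possibly wild --- ruling that out is the point of the whole argument --- so $P_q$ would have to be the fixed subspace of the linearized maps, and relating the distance to that subspace back to $\|\pmb{v}(x)\|$ reintroduces the same problem. The paper's proof is structured precisely to avoid this: all of its estimates are proportional to $\delta=d\big(x,B(x)\big)=\|\pmb{v}(x)\|$ itself, and the only input beyond the $(1+\varepsilon)$-Lipschitz bound is the quadratic barycenter identity, which yields Lemma \ref{flatbarycenterlemma}, $d\big(B(x),g_0B(x)\big)\leq\frac{1}{40}\,d\big(x,B(x)\big)$ --- again an error scaling with $\|\pmb{v}(x)\|$, with no comparison to $d(x,M^G)$ ever needed.

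The linearization step itself is also not established as stated. A subsequential Arzel\`a--Ascoli limit of the rescalings $g_\lambda$ need not be $1$-homogeneous (homogeneity compares the scales $\lambda$ and $t\lambda$, i.e.\ requires uniqueness of the blow-up, which you do not have), and even a genuine orthogonal limit only carries information at infinitesimal scale: promoting it to $\|g(q+u)-q-A^q_gu\|\leq C\varepsilon\|u\|$ with a single matrix $A^q_g$ on a neighborhood of uniform size, uniformly in $q\in M^G\cap V_p$, is not automatic. A John-type approximation of a $(1+\varepsilon)$-bilipschitz map by isometries gives error $O(\varepsilon r)$ on the ball of radius $r$, but with an isometry that may drift from scale to scale; pinning one matrix across all scales would need the finite-order hypothesis in an essential and quantitative way, with constants that threaten to degrade with the order of $g$, and no such argument is given. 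None of this machinery is needed: the paper's elementary three-step barycenter argument (for $\tau=\frac15$ the flow moves $x$ by at most $\frac{\delta}{3}$, hence lands within $\frac{19}{20}\delta$ of $B(x)$, and then the convex-hull constraint coming from Lemma \ref{flatbarycenterlemma} forces $d\big(\varphi_\tau(x),B(\varphi_\tau(x))\big)\leq\frac{999}{1000}\delta$) proves the statement using only the Lipschitz bound and Euclidean geometry. If you want to salvage your approach, the missing ingredient you must supply independently is a bound of the form $d(x,M^G)\leq C\|\pmb{v}(x)\|$ near $p$, and I see no way to get it that is easier than the lemma itself.
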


	We suppose that, for $x$ sufficiently close to $p$, there is a flat ball $V_p$ centered at $p$ and of radius $2d(x,p)$. For some coordinate system on this ball, the map $B$ and the vector field $\pmb{v}$ have simple expressions:
	\begin{align*}
		B(x)&=\dfrac{1}{\vert G\vert}\sum_{g\in G}g x\\
		\pmb{v}(x)&=B(x)-x
	\end{align*}
	
	The maps $B$ has a $(1+\varepsilon)$-Lipschitz dependency on $x$ and the vector field $\pmb{v}$ has a $(2+\varepsilon)$-Lipschitz dependency on $x$. Some classical computations about centers of mass lead to the following equation, for $x$ and $y$ in $\mathbb{R}^n$.	
	\[\dfrac{1}{\mid G\mid}\sum_{g\in G}d(y,gx)^2=d\big(y,B(x)\big)^2+\dfrac{1}{\mid G\mid}\sum_{g\in G}d\big(B(x)-gx\big)^2 \label{barycenter}\tag{$\ast$}\]
	We begin by showing the flat version of Lemma \ref{barycenterlemma}.

	\begin{lemma}\label{flatbarycenterlemma}
		If $M$ is flat around a point $p$, there is an open neighborhood $V_p$ of $p$ such that, for $g_0\in G$, we have
		\[d\big(B(x),g_0B(x)\big)\leqslant R~d\big(x,B(x)\big).\]
		for all $x$ in $V$, where $R=\dfrac{1}{40}$.
	\end{lemma}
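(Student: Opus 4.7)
The plan is to combine the identity $(\ast)$ with the $(1+\varepsilon)$-bilipschitz hypothesis to bootstrap an estimate on $A := \max_{g \in G} d(g B(x), B(x))$ that depends only on $d(x, B(x))$. First I specialize $(\ast)$ to $y = g_0 B(x)$, which rewrites $d(g_0 B(x), B(x))^2$ as
\[\frac{1}{|G|}\sum_{g \in G} d(g_0 B(x), g x)^2 \;-\; V(x), \qquad V(x) := \frac{1}{|G|}\sum_{g\in G} d(B(x), g x)^2.\]
Writing $g x = g_0(g_0^{-1} g x)$ and applying the bilipschitz bound to $g_0$ termwise, then reindexing $h = g_0^{-1} g$ (which still ranges over $G$), the first sum is at most $(1+\varepsilon)^2 V(x)$. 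Taking the maximum over $g_0 \in G$ yields $A^2 \leq (2\varepsilon + \varepsilon^2)\, V(x)$.

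Next I bound $V(x)$ in the reverse direction. For each $g \in G$, writing $B(x) = g(g^{-1} B(x))$ and using that $g$ is $(1+\varepsilon)$-bilipschitz gives
\[d(B(x), g x) \leq (1+\varepsilon)\, d(g^{-1} B(x), x) \leq (1+\varepsilon)\bigl(A + d(x, B(x))\bigr),\]
where the second step is the triangle inequality together with $d(g^{-1} B(x), B(x)) \leq A$. Averaging the squares gives $V(x) \leq (1+\varepsilon)^2 \bigl(A + d(x, B(x))\bigr)^2$, and substituting into the previous display and taking square roots produces
\[A \leq c \bigl( A + d(x, B(x)) \bigr), \qquad c := (1+\varepsilon)\sqrt{\varepsilon(2+\varepsilon)},\]
which rearranges to $A \leq \tfrac{c}{1-c}\, d(x, B(x))$ as soon as $c < 1$.

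The lemma then reduces to the numerical check $\tfrac{c}{1-c} \leq \tfrac{1}{40}$, equivalently $c \leq 1/41$, equivalently $c^2 \leq 1/1681$. For $\varepsilon = 1/4000$ a crude direct estimate gives $c^2 = (1+\varepsilon)^2\, \varepsilon(2+\varepsilon) < 2.01/4000 < 1/1681$, with comfortable margin. Since $d(B(x), g_0 B(x)) \leq A$ for every $g_0 \in G$, this establishes the claim with $R = 1/40$; the neighborhood $V_p$ is taken small enough that the flat model applies and that $B$, as well as the orbit of $x$, is defined.

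The main obstacle is the self-referential nature of the estimate: the variance $V(x)$ arises as the natural bound in the first inequality, yet $V(x)$ must itself be controlled by the very quantity $A$ one is trying to bound. Closing the loop forces both inequalities to be used together, and the quantitative hypothesis $\varepsilon = 1/4000$ enters exactly to ensure $c < 1/41$, which is what makes the resulting linear inequality in $A$ invertible and delivers the target constant $R = 1/40$.
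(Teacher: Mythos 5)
Your proof is correct and follows essentially the same route as the paper: evaluate $(\ast)$ at $y=g_0B(x)$, use the $(1+\varepsilon)$-bilipschitz bound on the averaged squared distances, and close the self-referential estimate by a triangle-inequality bootstrap, arriving at the same constant $\dfrac{(1+\varepsilon)\sqrt{2\varepsilon+\varepsilon^2}}{1-(1+\varepsilon)\sqrt{2\varepsilon+\varepsilon^2}}$ and the same numerical check for $\varepsilon=\frac{1}{4000}$. The only cosmetic difference is that you bootstrap on $A=\max_{g}d\big(gB(x),B(x)\big)$ using the variance $V(x)$ directly, whereas the paper bootstraps on $\max_{g}d\big(B(x),gx\big)$.
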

	
	\begin{proof}
		From the Lipschitz continuity of the action of $g_0$, we have
		
		\[\dfrac{1}{\mid G\mid}\sum_{g\in G}d\big(g_0B(x),gx\big)^2\leqslant(1+\varepsilon)^2\dfrac{1}{\mid G\mid}\sum_{g\in G}d\big(B(x),gx\big)^2\]
		
		Evaluating (\ref{barycenter}) at $y=g_0B(x)$, we obtain
		\[d\big(g_0B(x),B(x)\big)^2=\dfrac{1}{\mid G\mid}\sum_{g\in 
			G}d\big(g_0B(x),gx\big)^2-\dfrac{1}{\mid G\mid}\sum_{g\in 
			G}d\big(B(x),gx\big)^2\]
		
		Together with the previous inequality, this leads to
		
		\[d\big(g_0B(x),B(x)\big)\leqslant \sqrt{2\varepsilon+\varepsilon^2}~\underset{g\in 
			G}{\text{max}}~d\big(B(x),gx\big)\]
		
		We would like to obtain a inequality depending only on $\varepsilon$ and on the 
		distance $d\big(x,B(x)\big)$. To do this, note that we have the inequality :
		\begin{align*}
			\underset{g\in G}{\text{max}}~d\big(B(x),gx\big) &\leq 
			(1+\varepsilon)~\underset{g\in G}{\text{max}}~d\big(gB(x),x\big)\\
			&\leq (1+\varepsilon)~\underset{g\in 
				G}{\text{max}}\Big(d\big(gB(x),B(x)\big)+d\big(B(x),g\big)\Big)\\
			&\leq (1+\varepsilon)\Big(\sqrt{2\varepsilon+\varepsilon^2}~\underset{g\in 
				G}{\text{max}}~d\big(B(x),gx\big)+d\big(x,B(x)\big)\Big)
		\end{align*}
		
		So
		$$\underset{g\in G}{\text{max}}~d\big(B(x),gx\big)\leq 
		\dfrac{1+\varepsilon}{1-(1+\varepsilon)\sqrt{2\varepsilon+\varepsilon^2}}~d\big(x,B(x)\big)$$
		
		And finally
		$$d\big(g_0B(x),B(x)\big)\leq 
		\dfrac{(1+\varepsilon)\sqrt{2\varepsilon+\varepsilon^2}} 
		{1-(1+\varepsilon)\sqrt{2\varepsilon+\varepsilon^2}}~d\big(x,B(x)\big)$$
		
		Taking $\varepsilon=\dfrac{1}{4000}$, the quantity $\dfrac{(1+\varepsilon)\sqrt{2\varepsilon+\varepsilon^2}} 
		{1-(1+\varepsilon)\sqrt{2\varepsilon+\varepsilon^2}}$ is smaller than $\dfrac{1}{40}$.
	\end{proof}

	We can now prove Lemma \ref{flatcontrol}.

	\begin{proof}[Proof of Lemma \ref{flatcontrol}]
		Let $x$ be a point of $V_p$, $\tau=\dfrac{1}{5}$, $k'=\dfrac{999}{1000}$ and $\delta=d\big(x,B(x)\big)$.
		
		\underline{Step 1 :} We prove that $d\big(\varphi_t(x),x\big)\leq\dfrac{\delta}{3}$ for every $t\leq\tau$.
		
		If $d\big(\varphi_t(x),x\big)>\dfrac{\delta}{3}$ for some $t\leq\tau$, let
		\[t_0=\min\Big\{t\leq\tau~\mid~ d\big(\varphi_t(x),x\big)\geq\dfrac{\delta}{3}\Big\}.\]
		For $t\leq t_0$, we have $d\big(\varphi_t(x),x\big)\leq\dfrac{\delta}{3}$, so $d\Big(B \big(\varphi_t(x)\big),B(x)\Big)\leq\dfrac{\delta}{3}(1+\varepsilon)$ as $B$ is $(1+\varepsilon)$-Lipschitz. As $\pmb{v}\big(\varphi_t(x)\big)=B\big(\varphi_t(x)\big)-\varphi_t(x)$, $\varphi_t(x)$ is contained in the convex hull of $\{x\}\bigcup\mathcal{B}\big(B(x),\dfrac{\delta}{3}(1+\varepsilon)\big)$ for every $t\leq t_0$ (see Figure \ref{step1}).
		
		The distance between $\varphi_t(x)$ and $B\big(\varphi_t(x)\big)$ is then smaller than the diameter of this convex hull, which is $\delta+\dfrac{\delta}{3}(1+\varepsilon)=\dfrac{\delta(4+\varepsilon)}{3}$.
		So $\|\pmb{v}\big(\varphi_t(x)\big)\|\leq \dfrac{\delta(4+\varepsilon)}{3}$, then $d\big(x,\varphi_{t_0}(x)\big)\leq t_0\dfrac{\delta(4+\varepsilon)}{3}\leq\dfrac{1}{5}\dfrac{\delta(4+\varepsilon)}{3}<\dfrac{\delta}{3}$. This is in contradiction with the definition of $t_0$, so $d\big(\varphi_t(x),x\big)$ cannot exceed $\dfrac{\delta}{3}$ for $t\leq\tau$.
		
		\begin{figure}[h]
			\caption{Setting of Step 1}
			\centering
			\includegraphics[width=\textwidth]{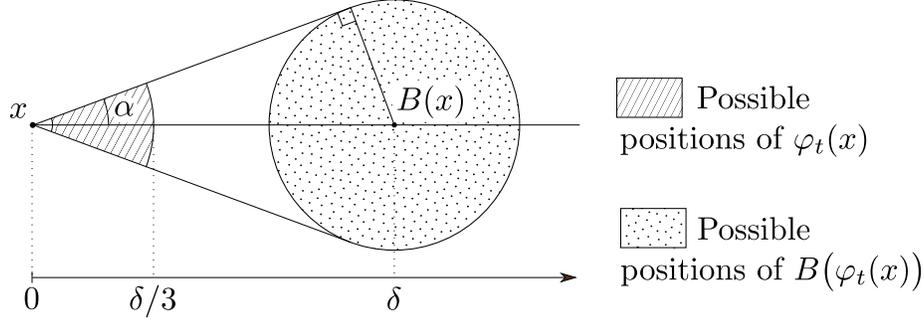}
			\label{step1}
		\end{figure}
		
		\underline{Step 2 :} We prove that $d\big(\varphi_\tau(x), B(x)\big)<\dfrac{19}{20}~\delta$.

		As $d\big(\varphi_t(x),x\big)\leq\dfrac{\delta}{3}$ for every $t\leq\tau$, we obtain $d\Big(B \big(\varphi_t(x)\big),B(x)\Big)\leq \dfrac{\delta}{3}(1+\varepsilon)$ and then $\|\pmb{v}\big(\varphi_t(x)\big)\|\geq \delta -\dfrac{\delta}{3}-\dfrac{\delta}{3}(1+\varepsilon)=\dfrac{\delta}{3}(1-\varepsilon)$.
		
		The distance between $\varphi_\tau(x)$ and $B(x)$ will be the greatest if $\pmb{v}\big(\varphi_t(x)\big)$ makes an angle $\alpha=\arcsin(\dfrac{1+\varepsilon}{3})$ with the vector $B(x)-x$ for every $t\leq\tau$ (see Figure \ref{step1}). At time $\tau$, if $\pmb{v}\big(\varphi_t(x)\big)$ made an angle $\alpha$ with  $B(x)-x$ for every $t\leq\tau$, $\varphi_\tau(x)$ would be at distance at least $\tau\dfrac{\delta}{3}(1-\varepsilon)$ from $x$, and at distance at most $\sqrt{\big(\tau\dfrac{\delta}{3}(1-\varepsilon)\big)^2+\delta^2-2\tau\dfrac{\delta^2}{3}(1-\varepsilon)\cos(\alpha)}\leq \dfrac{19}{20}\delta$ from $B(x)$.
		
		\underline{Step 3 :} We prove that $d\Big(\varphi_\tau(x), B\big(\varphi_\tau(x)\big)\Big)\leq \dfrac{999}{1000}~\delta$.
		
		The points in the orbit of $\varphi_\tau(x)$ verify the following inequality, for every $g\in G$.
		\[d\big( g\varphi_\tau(x),gB(x)\big)\leq(1+\varepsilon)~  d\big(\varphi_\tau(x),B(x)\big)\]
		Which, by Lemma \ref{flatbarycenterlemma}, leads to :
		\[d\big( g\varphi_\tau(x),B(x)\big)\leq\big(1+\varepsilon+R\big)d\big( \varphi_\tau(x)-B(x)\big)\tag{1}\]
		These points also verify :
		\[d\Big( g\varphi_\tau(x),gB\big(\varphi_\tau(x)\big)\Big)\geq\dfrac{1}{1+\varepsilon}~  d\Big(\varphi_\tau(x),B\big(\varphi_\tau(x)\big)\Big)\]
		Which, also by Lemma \ref{flatbarycenterlemma}, leads to :
		\[d\Big( g\varphi_\tau(x),B\big(\varphi_\tau(x)\big)\Big)\geq\big(\dfrac{1}{1+\varepsilon}-R\big)~  d\Big(\varphi_\tau(x),B\big(\varphi_\tau(x)\big)\Big)\tag{2}\]
		
		\begin{figure}[h]
			\caption{Limit case of Step 3}
			\centering
			\includegraphics[width=10cm]{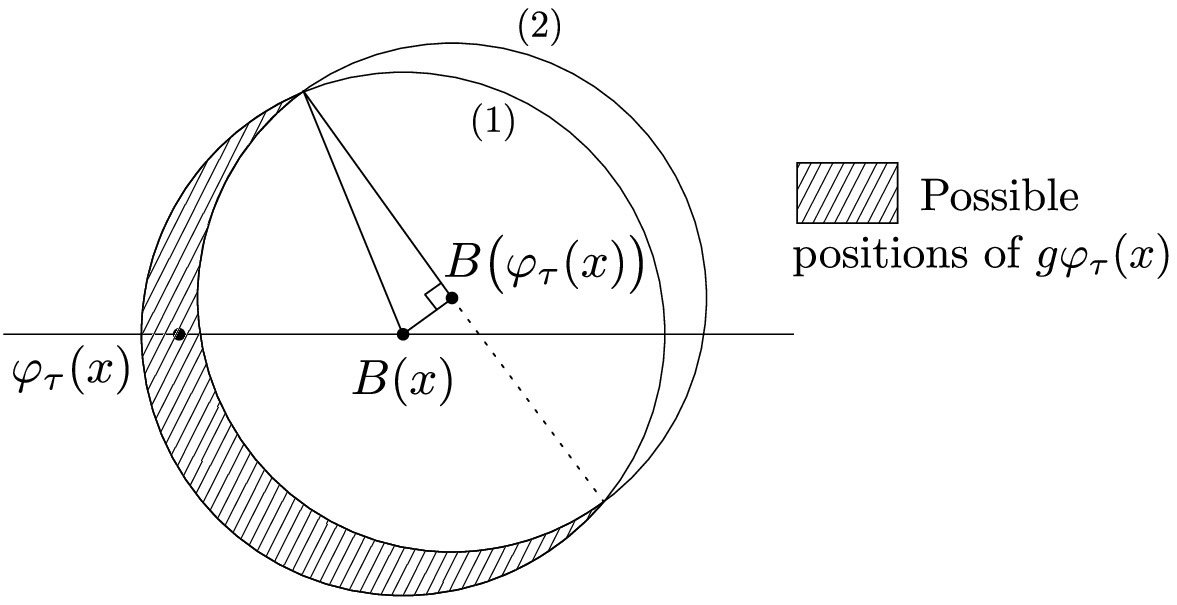}
			\label{step3}
		\end{figure}
		
		As $B\big(\varphi_\tau(x)\big)$ is the center of mass of the orbit $G\varphi_\tau(x)$, it must be contained in the convex hull of the set of points verifying inequalities $(1)$ and $(2)$. As shown in in Figure \ref{step3}, which present the limit case, the condition for $B\big(\varphi_\tau(x)\big)$ to be in this convex hull is given by 
		the inequality :
		
		\begin{align*}
		\big(1+\varepsilon+R\big)^2d\big( \varphi_\tau(x)-B(x)\big)^2\geq &~\big(\dfrac{1}{1+\varepsilon}-R\big)^2  d\Big(\varphi_\tau(x),B\big(\varphi_\tau(x)\big)\Big)^2\\
		&+d\Big( B\big(\varphi_\tau(x)\big)-B(x)\Big)^2
		\end{align*}
	
		Taking $\varepsilon=\dfrac{1}{4000}$, $R=\dfrac{1}{40}$ and $d\big(\varphi_\tau(x)-B(x)\big)=\dfrac{19}{20}\delta$, the positions of $B\big(\varphi_\tau(x)\big)$ satisfying this inequality are in the interior of an ellipse contained in the ball of center $\varphi_\tau(x)$ and of radius $\dfrac{999}{1000}~\delta$. So we necessary have $d\Big(\varphi_\tau(x), B\big(\varphi_\tau(x)\big)\Big)\leq \dfrac{999}{1000}~\delta$.
		
		So $\|\pmb{v}\big({\varphi_\tau}(x)\big)\|\leq\dfrac{999}{1000}~\delta=k'\|\pmb{v}(x)\|$
	\end{proof}
	
	\subsection{Reduction to the flat case}\label{euclideansection}
	
	In section \ref{sectionflat}, we worked locally on a flat neighborhood of a fixed point. In general, the manifold $M$ cannot be flatten in a neighborhood of $M^G$, but as every manifold is locally almost flat, Lemma \ref{flatcontrol} will allow us to produce the inequalities needed for Lemma \ref{control} and Lemma \ref{flatbarycenterlemma} will allow us to prove Lemma \ref{barycenterlemma}.
	
	\begin{proof}[Proof of Lemma \ref{control}]
		Let $p$ be a fixed point of $G$. On a small neighborhood of $p$, the manifold $M$ is almost flat. The goal of this proof is to compare the flow $\varphi$ obtained with the metric of $M$ and the flow $\varphi^E$ obtained by the flat metric of $T_pM$. Every object obtained in the flat metric of $T_pM$ will be noted with the exponent $E$.
		
		Let $U$ and $U'$ be the balls of center $p$ and of radii $\delta$ and $(1+\varepsilon)\delta$ for a $\delta$ small enough so that $U'$ is contained in $V$.

		First, notice that $B(x)$ has a Lipschitz dependency on $x$ and on the Riemannian metric $\rho$. This fact can be proved using a Lipschitz version of the implicit function theorem on the map
		\begin{align*}
			\Phi:U\times U'\times\mathcal{M} &\longrightarrow \mathbb{R}^n\\
			(x,y,\rho)&\longmapsto \rho\Big(\sum_{g\in G}\exp_y^{-1}(gx),~E_i\Big)
		\end{align*}
		where $\mathcal{M}$ is the space of all metrics on $U'$ for the uniform operator norm (according to the starting metric of $M$) and $(E_i)_i$ is a basis of sections of $TM$. The map $\Phi$ is then $K_1$-bilipschitz in $x$ and $y$ for some constant $K_1$. As the norm of $\sum\limits_{g\in G}\exp_y^{-1}(gx)$ is smaller than $K_2\delta$ for some constant $K_2$, the map $\Phi$ is $K_3\delta$-Lipschitz in $\rho$ for some constant $K_3$. The map $B$ is then $K_1^2$-Lipschitz in $x$ and $K_1K_3\delta$-Lipschitz in $\rho$.
		
		The Riemannian metric $\rho$ on $U'$ is always at a distance $K_4\delta$ from an Euclidean metric (namely, the metric of $T_pM$ induced by the exponential map), for some constant $K_4$. The ratio $\dfrac{d^E(x,y)}{d(x,y)}$ is then between $1-K_4\delta$ and $1+K_4\delta$ for any $x$ and $y$ in $U'$.
		
		With the above, the map $B_E$ is at a distance at most $K_1K_3K_4\delta^2$ from $B$. The distance between $\pmb{v}$ and $\pmb{v}^E$ is then itself bounded by $K_5\delta^2$ for some constant $K_5$. And the point $\varphi_\tau(x)$ is at a distance at most $K_6\delta^2$ from $\varphi^E_\tau(x)$ for some $K_6$.
		
		Lemma \ref{flatcontrol} gives us
		\[d^E\Big(\varphi^E_\tau(x),B^E\big(\varphi^E_\tau(x)\big)\Big)\leq k'd^E\big(x,B^E(x)\big)\]
		from which we obtain from what precedes
		\[d\Big(\varphi_\tau(x),B\big(\varphi_\tau(x)\big)\Big)\leq k'\delta+K\delta^2=(k'+K\delta)\delta\]
		for some constant $K$ depending only on $M$. Consequently, for any manifold $M$, it is always possible to work as locally as we want (i.e. to choose $\delta$ small enough) so that the inequality
		\[\|\pmb{v}^E\big(\varphi^E_\tau(x)\big)\|^E\leq k'\|\pmb{v}^E(x)\|^E\]
		implies the inequality
		\[\|\pmb{v}\big(\varphi_\tau(x)\big)\|\leq \dfrac{k'+1}{2}\|\pmb{v}(x)\|\]
		on $V$, which implies Lemma \ref{control}.
	\end{proof}
	
	We can also prove Lemma \ref{barycenterlemma} using the preceding computations.

	\begin{proof}[Proof of Lemma \ref{barycenterlemma}]
		From lemma \ref{flatbarycenterlemma}, we obtain
		\[d^E\big(B^E(x),g_0B^E(x)\big)\leqslant \dfrac{1}{40}~d^E\big(x,B^E(x)\big)\]
		for every $x$ in $V$.
		With the preceding method, we obtain
		\[d\big(B(x),g_0B(x)\big)\leqslant R~d\big(x,B(x)\big)\]
		for some constant $R>0$ depending only on $M$.
	\end{proof}
	
	\bibliographystyle{alpha}
	\bibliography{lipschitz-smith-conjecture}

\begin{thebibliography}{Ham08}

\bibitem[Ber03]{Be2003}
Marcel Berger.
\newblock {\em A panoramic view of {R}iemannian geometry}.
\newblock Springer-Verlag, Berlin, 2003.

\bibitem[Bin52]{Bi1952}
R.~H. Bing.
\newblock A homeomorphism between the {$3$}-sphere and the sum of two solid
  horned spheres.
\newblock {\em Ann. of Math. (2)}, 56:354--362, 1952.

\bibitem[Bin57]{Bi1957}
R.~H. Bing.
\newblock A decomposition of {$E^3$} into points and tame arcs such that the
  decomposition space is topologically different from {$E^3$}.
\newblock {\em Ann. of Math. (2)}, 65:484--500, 1957.

\bibitem[Bin59]{Bi1959}
R.~H. Bing.
\newblock Conditions under which a surface in {$E^{3}$} is tame.
\newblock {\em Fund. Math.}, 47:105--139, 1959.

\bibitem[BLP05]{BLP2005}
Michel Boileau, Bernhard Leeb, and Joan Porti.
\newblock Geometrization of 3-dimensional orbifolds.
\newblock {\em Ann. of Math. (2)}, 162(1):195--290, 2005.

\bibitem[Bro62]{Br1962}
Morton Brown.
\newblock Locally flat imbeddings of topological manifolds.
\newblock {\em Ann. of Math. (2)}, 75:331--341, 1962.

\bibitem[CB08]{CB2008}
Craig Calcaterra and Axel Boldt.
\newblock Lipschitz flow-box theorem.
\newblock {\em J. Math. Anal. Appl.}, 338(2):1108--1115, 2008.

\bibitem[Dav86]{Da1986}
Robert~J. Daverman.
\newblock {\em Decompositions of manifolds}, volume 124 of {\em Pure and
  Applied Mathematics}.
\newblock Academic Press, Inc., Orlando, FL, 1986.

\bibitem[Eil49]{Ei1949}
Samuel Eilenberg.
\newblock On the problems of topology.
\newblock {\em Ann. of Math. (2)}, 50:247--260, 1949.

\bibitem[FA48]{FoAr1948}
Ralph~H. Fox and Emil Artin.
\newblock Some wild cells and spheres in three-dimensional space.
\newblock {\em Ann. of Math. (2)}, 49:979--990, 1948.

\bibitem[Fre13]{Fr2013}
Michael Freedman.
\newblock Bing topology and casson handles.
\newblock
  \url{https://www.math.uni-bielefeld.de/~sbehrens/files/Freedman2013.pdf},
  2013.

\bibitem[Ham08]{Ha2008}
D.~H. Hamilton.
\newblock Q{C} {R}iemann mapping theorem in space.
\newblock In {\em Complex analysis and dynamical systems {III}}, volume 455 of
  {\em Contemp. Math.}, pages 131--149. Amer. Math. Soc., Providence, RI, 2008.

\bibitem[KL88]{KwLe1988}
S\l~awomir Kwasik and Kyung~Bai Lee.
\newblock Locally linear actions on {$3$}-manifolds.
\newblock {\em Math. Proc. Cambridge Philos. Soc.}, 104(2):253--260, 1988.

\bibitem[Lem18]{LE2018}
Alexander Lemmens.
\newblock A local jordan-brouwer separation theorem, 2018.

\bibitem[Mun60]{Mu1960}
James Munkres.
\newblock Obstructions to the smoothing of piecewise-differentiable
  homeomorphisms.
\newblock {\em Ann. of Math. (2)}, 72:521--554, 1960.

\bibitem[MZ54]{MZ1954}
Deane Montgomery and Leo Zippin.
\newblock Examples of transformation groups.
\newblock {\em Proc. Amer. Math. Soc.}, 5:460--465, 1954.

\bibitem[OP19]{OnPa2019}
Jani Onninen and Pekka Pankka.
\newblock Bing meets sobolev, 2019.

\bibitem[Par20]{Pa2020}
John Pardon.
\newblock Smoothing finite group actions on three-manifolds, 2020.

\bibitem[Smi39]{Sm1939}
P.~A. Smith.
\newblock Transformations of finite period. {II}.
\newblock {\em Ann. of Math. (2)}, 40:690--711, 1939.

\bibitem[Sou10]{So2010}
J.~Souto.
\newblock A remark on the action of the mapping class group on the unit tangent
  bundle.
\newblock {\em Annales de la Facult\'e des sciences de Toulouse :
  Math\'ematiques}, Ser. 6, 19(3-4):589--601, 2010.

\end{thebibliography}
	
\end{document}